\newtheorem{theorem}{Theorem}
\newtheorem{proposition}{Proposition}
\newtheorem{lemma}[proposition]{Lemma}
\begin{document}
   \title{ Non-intersecting paths and Hahn orthogonal polynomial ensemble}
   \author{Vadim Gorin}
   \date{}

\maketitle

\begin{abstract} We compute the bulk limit of the correlation functions for the uniform measure on lozenge
  tilings of a hexagon. The limiting determinantal process is a translation invariant extension of the
  discrete sine process, which also describes the ergodic Gibbs measure of an appropriate slope.
\end{abstract}

\section*{Introduction}

In this paper we study a well-known combinatorial model of
determinantal point processes in dimension 1+1 (one spatial and one
time variable). The model depends on 3 positive integers $a,b,c$ and
is given by the uniform distribution on the finite set
$\Omega(a,b,c)$ of combinatorial objects that can be described in
several equivalent ways: 3d Young diagrams (in other words plane
partitions) in an $a\times b\times c$ box; tilings of the hexagon of
side lengths $a,b,c,a,b,c$ by rhombi of three types; and a dimer
model on the honeycomb lattice (see e.g. Cohn--Larsen-Propp
\cite{Cohn-Larsen-Propp}, Johansson \cite{Johansson},
Johansson--Nordenstam \cite{JohN}). Correspondences between
different models can be found in the appendix.

Following Johansson, we use yet another model where $\Omega(a,b,c)$
is identified with the ensemble of non-intersecting polygonal paths
on the plane lattice.  The latter model leads to determinantal point
processes (put it otherwise, random point configurations) varying
over time. The determinantal property, which is of crucial
importance, means that dynamical (i.e. space-time) correlation
functions of the model can be obtained as minors of some matrix
which is called dynamical correlation kernel. The aim of the present
paper is to compute the asymptotics of this kernel in the ``bulk
limit'' regime.

For a fixed time moment our model provides a random point
configuration on the one-dimensional lattice --- the so-called Hahn
orthogonal polynomial ensemble. The dynamical model can be described
as a chain of such ensembles with varying parameters, thus it can be
called a dynamical Hahn ensemble. The dynamical correlation kernel
is the so-called extended Hahn kernel; it has been previously
obtained by Johansson \cite{Johansson} and Johansson--Nordenstam
\cite{JohN}. The Eynard-Metha theorem \cite{EM} provides the basis
for computations of the kernel.

The main result of our paper is computation of the asymptotics of
the extended Hahn kernel in the bulk limit regime. The answer is
given by the translation-invariant kernel $K(x,s;y,t)$ defined on
$\Bbb Z^2\times\Bbb Z^2$, for which a simple integral representation
was derived:
  $$K(x,s;y,t)= \frac{1}{2\pi i}\oint_{e^{-i\phi}}^{e^{i\phi}}
  \left(1+cw\right)^{t-s}w^{x-y-1}dw. $$
 Here the integration is to be performed over the right side of the unit circle when  $s\ge t$ and over the left side otherwise, parameters $\phi$ and $c$  depend on the location in the bulk (see Theorem 1).

The static version of the kernel ($s=t$) is the well-known discrete
sine kernel on $\Bbb Z\times\Bbb Z$ \cite{BOkOl}. The obtained
dynamical extension of the sine kernel is among the kernels
constructed in \cite{Bor}. The kernel $K(x,s;y,t)$ is connected with
the kernel obtained by Okounkov and Reshetihin in \cite{Ok2} by a
simple transformation (duality of particles and holes on the
lattice).

In the paper by Kenyon \cite{Ken} local correlations in the model
similar to the one considered by us in the similar limit regimes
were studied. Using different techniques it was shown that the limit
distribution is one of the so-called ergodic gibbs measures
$\mu_{\nu}$ on the infinite hexagonal graph. The latter measures
were classified by Sheffield in his thesis \cite{She}, and in the
article \cite{KOS} correlation kernels for the measures $\mu_{\nu}$
were computed. Kenyon points out that his results should hold in the
general case but in his paper \cite{Ken}, full proof was given under
the assumption that the so-called limit shape has no facets. In our
case the limit shape does have facets, and it seems that our result
does not follow from \cite{Ken}.

In the present article we do not employ the techniques of the papers
by Kenyon, Okounkov and Sheffield and obtain Theorem 1 using a new
method.

Subtle analytical results about asymptotic properties of discrete
orthogonal polynomial ensembles can be found in literature (see
\cite{Baik-Kriecherbauer-McLauphlin-Miller}, \cite{Johansson}), but
they deal with static rather than dynamical ensembles. The usage of
the well-known technique of investigating asymptotics via a suitable
integral representation of the kernel in the case of the extended
Hahn kernel fails too.

We solve the problem using the method first proposed in  \cite{BO2}.
Although in \cite{BO2} again only static models were investigated,
it turned out that the method can be adapted to the dynamical model
of our interest. We use some relations between Hahn polynomials with
varying parameters for the computation of the pre-limit kernel
(similar approach was used in \cite{BO1}).

The paper is organized as follows: In the first section, we define a
stochastic process under study and compute its simplest
characteristics. In Section 2, the determinantal form of the process
is proved and its dynamical correlation kernel is expressed through
the Hahn orthogonal polynomials. In Section 3, the main theorem is
formulated and proved. Finally, in the fourth section we analyze
obtained results and show their connection with theorems proved in
\cite{Cohn-Larsen-Propp} and \cite{Ok2}.

The author would like to thank A.~Borodin for proposing the problem,
advices and hinting the required form of relations for the Hahn
polynomials. The author is grateful to G.~Olshanski for many
fruitful discussions and invaluable help in writing this article.

\section{ Basic definitions and properties }
\subsection{Base model}
Consider the integer lattice $\mathbb{Z}^2$ on the plane $(t,x)$ and
two sets of points on it: $X_i=(0,i-1)$ and $Y_i=(T,S+i-1)$, where
$i$ ranges from $1$ to some fixed $N$, while $T$ and $S$ are some
fixed parameters ($T\ge S$). We are going to study collections of
$N$ non-intersecting paths passing through the lattice nodes, such
that the $i$-th path connects $X_i$ and $Y_i$. Every path is a
polygonal line each link of which is a segment with left end-point
$(t,x)\in\mathbb{Z}^2$ and right end-point $(t+1,x)$ or $(t+1,x+1)$.
In other words, the paths consist of horizontal segments and
segments inclined at an angle of 45 degrees, thereby while moving
along the path the $t$--coordinate increases and the $x$--coordinate
doesn't decrease. We will often call the $t$--coordinate ``time''.

Introduce the probability space of all such families of paths and
the probability measure on it considering all families equiprobable.

Now let us consider an arbitrary time moment $t$ and a set of points
$Z_1,\dots,Z_N$ with coordinates $(t,z_i)$ respectively (we assume
that $z_i$ are arranged in the ascending order). The probability of
the event that our collection of paths passes through the points
$Z_i$ is well defined. This probability induces a probability
measure $P_t(z_1,\dots,z_N)$ on the $N$--point sets of positive
integers in a natural way. Next, the collections of paths under
consideration define a stochastic process with discrete time
$t=1,\dots,T$, the state space consisting of $N$--point sets of
positive integers, and one-dimensional distributions $P_t$. Let us
denote this process by $H_t$. Our aim is  studying properties of
this process.

The above process is a Markov process: this is readily seen from the
very definition of  the Markov property (the past and the future are
independent  given the present).

It is noteworthy that all paths ( and consequently all
configurations of the process $H_t$) are contained inside a hexagon
whose two sides are parallel to the $t$--axis, other two ones are
parallel to the $x$--axis and the remaining two sides are inclined
at an angle of 45 degrees. The corresponding illustration is given
in figure $1$

\begin{center}
 {\scalebox{0.7} {\includegraphics{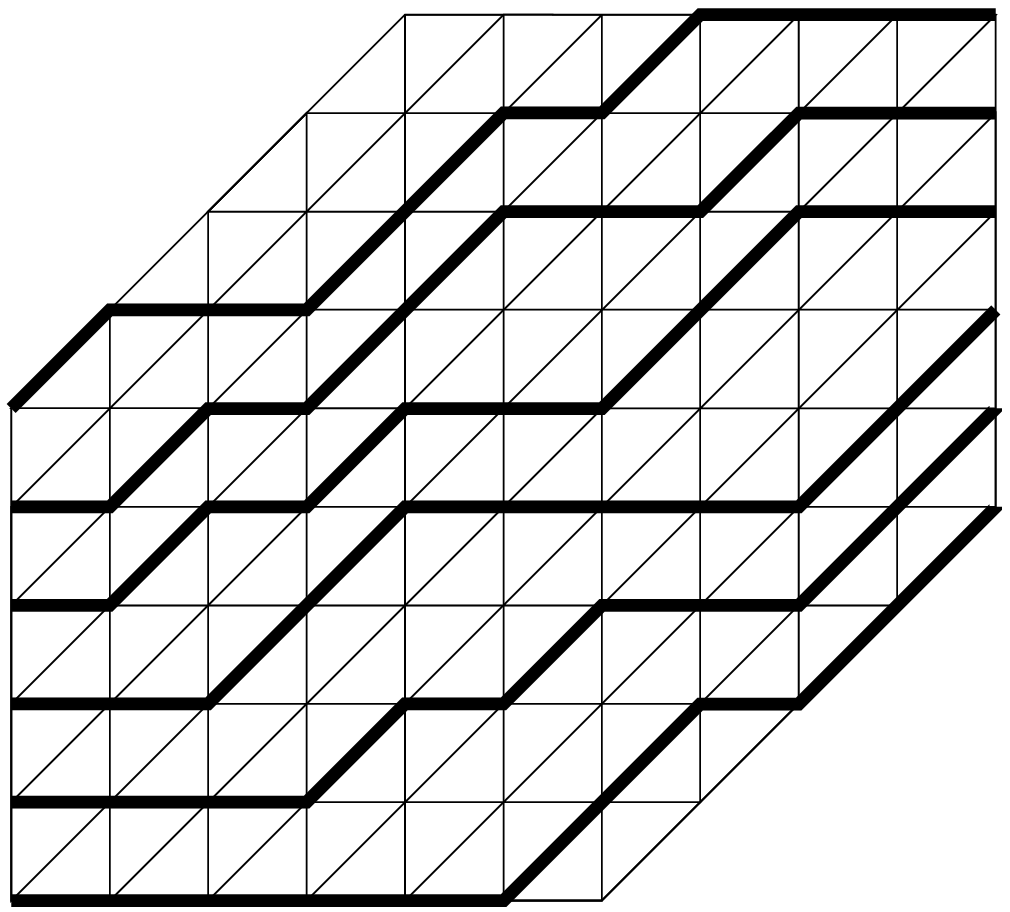}}}

 Figure 1. Collection of 6 non-intersecting paths.
\end{center}

\subsection{Elementary numerical characteristics}

We start with the explicit computation of the quantity
$P_t(z_1,\dots ,z_n)$

\begin{proposition} The number of collections of the non-intersecting paths
connecting points $(t_1,a_i)$ and $(t_2,b_i)$, where $i$ varies from $1$ to
$N$, is equal to $\det_{i,j=1,\dots ,N}\left[{t_2-t_1\choose b_i-a_j}\right]$.
\end{proposition}

\begin{proof}
 This fact is  well known. The trick which is used in the proof is described in  \cite{GV}.
\end{proof}

It follows from Proposition 1 and definitions that
$$ P_t(z_1,\dots ,z_N)=\frac{ \det_{i,j=1,\dots ,N}\left[{t\choose
z_i+1-j}\right]\cdot \det_{i,j=1,\dots ,N}\left[{T-t\choose
S+i-1-z_j}\right]}{ \det_{i,j=1,\dots ,N}\left[{T\choose
S+i-j}\right]}.$$

Next we use Theorem 26 from \cite{Kr} for the computation of the
determinants. After some simplifications we obtain (in what follows
we use the standard notation $(a)_i=a(a+1)\dots (a+i-1)$ ) :

\begin{multline*}
 \prod\limits_{1\leq i<j\leq N} (z_i-z_j)^2
 \prod\limits_{i=1}^N\frac{1}
    {z_i!(t-z_i+N-1)!(S-z_i+N-1)!(T-t-S+z_i)!}\cdot\\
 \cdot\prod\limits_{i=1}^N\frac{(t+1)_{i-1}(T-t+1)_{i-1}(S-i+N)!(T-S+i-1)!}
    {(T+1)_{i-1} (i-1)!}
  \cdot \left(\frac{t!(T-t)!}{T!}\right)^N.
\end{multline*}

After further transformations the probability can be written in the
form:
$$
 \frac{1}{Z}\cdot\prod\limits_{1\leq i<j \leq N} (z_i'-z_j')^2
 \prod\limits_{i=1}^N\left(\frac{(\alpha+1)_{z_i'}(\beta+1)_{M-z_i'}}{z_i'!(M-z_i')!}\right).
$$
Here $Z$ is some normalization constant depending on $T$, $N$, $S$
and $t$, while coordinate $z_i'$ is related to $z_i$  simply through
the shift by a quantity  depending on the remaining parameters . All
explicit formulas connecting different parameters in different
representations of the weight function will be given in the next
section.

Thus, $$
 P_t(z_1,\dots ,z_N)=\frac{1}{Z}\prod\limits_{1\leq i<j \leq N} (z_i'-z_j')^2
 \prod\limits_{i=1}^Nw_{\alpha,\beta,M}(z_i'),$$
where
 $w_{\alpha,\beta,M}(x)=
 \frac{(\alpha+1)_{x}(\beta+1)_{M-x}}{x!(M-x)!}$, $x=0,\dots , M$, is the weight
of \emph{Hahn orthogonal polynomials } (for the definition and
properties of the Hahn polynomials see \cite{KC}) while parameters
$\alpha$, $\beta$, $M$ depend on $T$, $N$, $S$ and $t$; as before,
$z_i'$ is related to $z_i$ by a shift of the origin. In what follows
$w_t(x)$ denotes the weight function at the time moment $t$.

\section{ Determinantal representation. The correlation kernel.}
\subsection{One-dimensional distributions.}
\begin{proposition} Consider an arbitrary non-negative weight $w(x)$ on the set of non-negative integers. Assume that
$p_0, p_1,\dots$ are corresponding orthogonal polynomials. Further
consider the probability measure on the N-tuples of integers
$$P_t(z_1,\dots ,z_N)=
 \frac{1}{Z}\prod\limits_{1\leq i<j \leq N} {(z_i-z_j)^2}
 \prod\limits_{i=1}^Nw(z_i).$$
Then the probability that the random $N$-tuple contains given
distinct points $x_1<x_2<\dots <x_k$ is equal to  $$
\det_{i,j=1,\dots ,k}\left[K(x_i,x_j)\right]$$ where the kernel
 $K$ is computed using formula
 $$K(x,y)=\sqrt{w(x)w(y)}\sum_{n=0}^{N-1}\frac{p_n(x)p_n(y)}{(p_n,p_n)}.$$
\end{proposition}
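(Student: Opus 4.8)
The plan is to reduce the measure to a product of two determinants built from orthonormal functions, and then evaluate the correlation functions by a discrete analogue of the Gram--Andr\'eief identity.

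First I would rewrite the squared Vandermonde factor in terms of the orthogonal polynomials. Since $p_n$ is a polynomial of degree exactly $n$, elementary column operations turn the matrix $[z_i^{\,j-1}]$ into $[p_{j-1}(z_i)]$ at the cost of a multiplicative constant (the product of the leading coefficients of $p_0,\dots,p_{N-1}$). Hence
$$\prod_{1\le i<j\le N}(z_i-z_j)=\det_{i,j=1}^N[z_i^{\,j-1}]=\mathrm{const}\cdot\det_{i,j=1}^N[p_{j-1}(z_i)],$$
so that $\prod_{i<j}(z_i-z_j)^2$ equals a constant times $\big(\det[p_{j-1}(z_i)]\big)^2$. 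Next I would absorb the weights: setting $\Psi_n(x)=\sqrt{w(x)}\,p_n(x)/\sqrt{(p_n,p_n)}$, the orthogonality of the $p_n$ becomes orthonormality, $\sum_x\Psi_m(x)\Psi_n(x)=\delta_{mn}$. Distributing $\prod_i\sqrt{w(z_i)}$ into each of the two determinants and folding the normalizations into the constant, the measure takes the form
$$P_t(z_1,\dots,z_N)=\frac{1}{Z'}\,\det_{i,j=1}^N\big[\Psi_{j-1}(z_i)\big]\cdot\det_{i,j=1}^N\big[\Psi_{j-1}(z_i)\big],$$
and the kernel in the statement is exactly $K(x,y)=\sum_{n=0}^{N-1}\Psi_n(x)\Psi_n(y)$.

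Finally I would compute the correlation functions via the discrete Gram identity
$$\frac{1}{M!}\sum_{y_1,\dots,y_M}\det_{i,j}[f_i(y_j)]\det_{i,j}[g_i(y_j)]=\det_{i,j}\Big[\textstyle\sum_y f_i(y)g_j(y)\Big].$$
Applied with $M=N$ and $f=g=\Psi$, orthonormality makes the right-hand Gram matrix the identity, fixing $Z'=N!$. To obtain the probability that the random $N$-tuple contains the prescribed points $x_1<\dots<x_k$, I would hold these $k$ arguments fixed, expand both determinants by Laplace along the corresponding rows, and sum the remaining $N-k$ variables using the same identity. The orthonormality of the $\Psi_n$ yields the reproducing property $\sum_y K(x,y)K(y,z)=K(x,z)$, so the iterated summation telescopes and leaves precisely $\det_{i,j=1}^k[K(x_i,x_j)]$.

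The main obstacle is the bookkeeping in this last step: tracking the combinatorial factor $N!/(N-k)!$ against the Laplace-expansion signs so that the stray constants cancel and the output is exactly the $k\times k$ determinant. The cleanest way around this is to invoke the general statement that any measure proportional to $\det[\Psi]\det[\Psi]$ with orthonormal $\Psi_n$ is a determinantal point process with correlation kernel $K(x,y)=\sum_n\Psi_n(x)\Psi_n(y)$; the reproducing property is exactly the input that makes that statement work, and here it holds by construction.
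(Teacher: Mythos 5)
Your proposal is correct and is essentially the classical argument for orthogonal polynomial ensembles: Vandermonde rewritten via the $p_n$, weights absorbed into orthonormal functions $\Psi_n$, the identity $(\det[\Psi_{j-1}(z_i)])^2=\det[K(z_i,z_j)]$, and the Mehta--Gaudin integration-out lemma (your ``reproducing property'' step) to collapse the marginal sums into $\det_{k}[K(x_i,x_j)]$. The paper does not spell out a proof at all --- it simply cites Mehta's \emph{Random Matrices} --- and the argument found there is exactly the one you reconstructed, so no further comparison is needed.
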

Proof of this theorem can be found in  \cite{M}

Such {a probability} distribution on the  $N$-tuples of integers is
called \emph{the orthogonal polynomial ensemble}. Observe that
one-dimensional distributions in our problem have precisely the same
type as those in the last proposition.

Now we return to the process $H_t$ under consideration. We are going
to describe explicitly its one-dimensional distributions and the
involved orthogonal polynomials and weight functions.

In our model the weight function can be written in the form
\begin{multline*} w_t(x)=\frac{c}
    {z_i!(t-x+N-1)!(S-x+N-1)!(T-t-S+x)!}\\
   =\frac{(\alpha+1)_{x'}(\beta+1)_{M-x'}}{x'!(M-x')!}\end{multline*}
(here $c$ is some constant that does not depend on $x$ while $x'$
differs from $x$ by some shift). Note that the support of the weight
function varies in time, i.e.,  when $t$ increases by one either the
support doesn't change or one point is being added to it from above
or one point is being removed from below or both actions happen
simultaneously (see Figure 1). Denote the support of $w_t$ by
${\mathfrak X_t}$.

Denote by $Q_k(x',\alpha,\beta,M)$ the Hahn orthogonal polynomial of
degree $k$ with parameters $\alpha, \beta, M$. According to
\cite{KC} these polynomials are defined when $x'=0,\dots ,M$,
$k=0,\dots ,M$, and they are orthogonal with respect to the weight
$$\frac{(\alpha+1)_{x'}(\beta+1)_{M-x'}}{x'!(M-x')!}.$$

The Hahn polynomials can be expressed through the hypergeometric
series:
$$
Q_k(x',\alpha,\beta,M)=
\mathstrut_3F_2{{-k,-x',k+\alpha+\beta+1}\choose {-M,\alpha+1}}(1)
$$

Denote by  $H_k^t(x)$ the Hahn polynomial $Q_k$ corresponding to the
time moment $t$ and shifted in such a way that its domain of
definition coincides with ${\mathfrak X_t}$.

There are 4 cases depending on the relations between $t$ and other
parameters. In each case $\alpha$, $\beta$, $M$ and $x'$ are
expressed in terms of the parameters of our model differently:
$$\begin{array}{l}
  (I)\quad t<S+1,\quad t<T-S+1:\\
  M=t+N-1,\quad
  \alpha=-S-N,\quad
  \beta=S-T-N,\quad
  x'=x,\\
  H_t^k(x)=Q_k(x',\alpha,\beta,M)=
   \mathstrut_3F_2{{-k,-x',k+\alpha+\beta+1}\choose {-M,\alpha+1}}(1)=
   \mathstrut_3F_2{{-k,-x,k-2N-T+1}\choose {-S-N+1,-t-N+1}}(1),\\
   0\le x\le M=t+N-1,\quad
   0\le k\le M=t+N-1;
  \\
  \\
  (II)\quad S-1<t<T-S+1:\\
  M=S+N-1,\quad
  \alpha=-t-N,\quad
  \beta=t-N-T,\quad
  x'=x,\\
  H_t^k(x)=Q_k(x',\alpha,\beta,M)=
   \mathstrut_3F_2{{-k,-x',k+\alpha+\beta+1}\choose {-M,\alpha+1}}(1)=
   \mathstrut_3F_2{{-k,-x,k-2N-T+1}\choose {-S-N+1,-t-N+1}}(1),\\
   0\le x\le M=s+N-1,\quad
   0\le k\le M=s+N-1;
  \\
  \\
  (III)\quad T-S-1<t<S+1:\\
  M=t+N-S-1,\quad
  \alpha=-T+t-N,\quad
  \beta=-t-N,\quad
  x'=T-t-s+x,\\
  H_t^k(x)=Q_k(x',\alpha,\beta,M)=
   \mathstrut_3F_2{{-k,-x+t+s-T,k-2N-T+1}\choose {-T+t-N+1,-T+S-N+1}}(1),\\
   -T+t+S\le x'\le t+N-1,\quad
   0\le k\le T+N-S-1;
  \\
  \\
  (IV)\quad t>T-S-1,\quad t>S-1:\\
  M=T-t+N-1,\quad
  \alpha=-T-N+S,\quad
  \beta=-S-N,\quad
  x'=T-t-s+x,\\
  H_t^k(x)=Q_k(x',\alpha,\beta,M)=
   \mathstrut_3F_2{{-k,-x+t+s-T,k-2N-T+1}\choose {-T+t-N+1,-T+S-N+1}}(1),\\
   -T+t+S\le x'\le S+N-1,\quad
   0\le k\le T+N-t-1.

\end{array}$$

 It is noteworthy that the expressions of polynomials  $H_k^t$ in terms of hypergeometric series coincide in cases
 (I) and (II). The same is true for cases (III) and (IV).

 Let us introduce one more notation:
 $$f_n^t(x)=\frac{H_n^t(x)\sqrt{w_t(x)}}{\sqrt{(H_n^t,H_n^t)}}.$$

 Here $(H_n^t,H_n^t)$ is the squared norm of the polynomial  $H_n^t$ with respect to the weight function $w_t(x)$.
 According to \cite{KC}
 \begin{multline*}
 (H_n^t,H_n^t)=(Q_k(x',\alpha,\beta,M),Q_k(x',\alpha,\beta,M))\\=
  \frac{(-1)^k (k+\alpha+\beta+1)_{N+1}(\beta+1)_k k!}
   {(2k+\alpha+\beta+1)(\alpha+1)_k (-N)_k N!}.\end{multline*}

 The functions  $f_n^t(x)$ form an orthonormal basis in the space
 $l_2({\mathfrak X_t})$.

 Now the kernel $K$ from Proposition 2 can be rewritten as:
 $$K(x,y)=\sum_{n=0}^{N-1}f_n^t(x)f_n^t(y).$$

\subsection{Dynamical determinantal property}
Let $X(t)$ be a stochastic process taking values in the space of all
subsets of some countable set. Let us say that $X(t)$ is a
\emph{determinantal} process if there is a function $K(x,s;y,t)$
such that for any $n$--tuple of distinct pairs
$(x_1,t_1),\dots,(x_n,t_n)$ in the space--time
$${\rm Prob}\{x_1\in X(t_1),x_2\in X(t_2),\dots,x_n\in X(t_n)\}=
  \det_{i,j=1,\dots ,n}\left[K(x_i,t_i;x_j,t_j)\right].$$

We will call the function  $K$  \emph{the dynamical correlation
kernel}. The above probabilities will be called the dynamical
correlation functions.

To prove that $H_t$ is determinantal we need the following abstract
proposition.

\begin{proposition} Assume that for every time moment $t$ we are given
 an orthonormal system $\{f^t_n\}$ in linear space
$l_2(\{0,1, \dots,L\})$ (it may form the basis of the whole space or only of
$l_2(L_t)$, where $L_t \subset \{0,..,\dots L\}$) and a set of numbers
 $c_0^t,c_1^t, \dots$. Denote
$$
v_{t,t+1}(x,y)=\sum_{n\ge 0}c_n^t f^t_n(x)f^{t+1}_n(y).
$$

 Assume also that we are given a discrete time  Markov process
$X_t$ taking values in $N$--tuples of elements of the set  $\{0,1,
\dots,L\}$, with one--dimensional distributions
$$(\det_{i,j=1,\dots ,N}\left[f_{i-1}^t(x_j)\right])^2$$
 and transition probabilities $$\frac{\det\left[v_{t,t+1}(x_i,y_j)\right]
 \det\left[f_{i-1}^{t+1}(y_j)\right]} {\det\left[f_{i-1}^t(x_j)\right]
  \prod\limits_{n=0}^{N-1}c_n^t}. $$

  Then
\begin{multline*}
  p_n(x_1,k_1;\dots ;x_n,k_n):=
 {\rm Prob}\{x_1\in X_{k_1},\dots, x_n\in X_{k_n}\}\\
 =\det_{i,j=1,\dots ,n}\left[K(x_i,k_i;x_j,k_j)\right],
\end{multline*}
where
\begin{gather*}
K(x,k;y,l)=\sum_{i=0}^{N-1}\frac{1}{c_i^{l,k}}f_i^k(x)f_i^l(y)
 ,\, k\ge l; \\ K(x,k;y,l)=-\sum_{i\ge N}c_i^{k,l} f_i^k(x)f_i^l(y),
 \,k<l; \\ c_i^{k,k}=1,\, c_i^{k,l}=c_i^k\cdot c_i^{k+1}\cdot\dots\cdot c_i^{l-1}
 \end{gather*}
 \end{proposition}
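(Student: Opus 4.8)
The plan is to establish the determinantal formula by the Eynard--Mehta machinery, verifying that the given one-dimensional distributions and transition probabilities fit the general biorthogonal framework. I first observe that the structure here is that of a \emph{Markov chain of orthogonal ensembles}: at each fixed time the measure is a squared Vandermonde-type determinant in the functions $f^t_{i-1}$, and the transition kernels $v_{t,t+1}$ are built from the numbers $c^t_n$ in exactly the multiplicative way that makes successive one-dimensional distributions consistent. My first step would be to check this consistency, i.e.\ that the product of the initial distribution with a chain of transition probabilities telescopes correctly: the denominators $\det[f^t_{i-1}(x_j)]$ cancel against the numerators of adjacent factors, and the constants $\prod_{n=0}^{N-1} c^t_n$ accumulate into the products $c^{k,l}_i$ appearing in the final kernel. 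This is the computation that pins down why the mysterious normalizations in the transition probabilities are forced.

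The core of the argument is the Eynard--Mehta theorem, cited in the introduction as \cite{EM}. I would set up the general determinantal-process computation: writing the multi-time correlation function $p_n(x_1,k_1;\dots;x_n,k_n)$ as a sum over all intermediate configurations of a product of the initial weight and the transition kernels, one recognizes a Cauchy--Binet/Andr\'eief-type collapse. The key algebraic input is that the functions $\{f^t_n\}$ are \emph{orthonormal} at each time (stated just before the proposition), so the relevant Gram-type matrices that arise when integrating out intervening variables are diagonal. This diagonality is what turns the abstract Eynard--Mehta kernel, which in general requires inverting a matrix of inner products, into the explicit diagonal sums $\sum_i (c^{l,k}_i)^{-1} f^k_i(x) f^l_i(y)$ for $k \ge l$ and $-\sum_{i \ge N} c^{k,l}_i f^k_i(x) f^l_i(y)$ for $k < l$. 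I would carry out the summation over intermediate points using orthonormality to reduce each contraction $\sum_x v_{s,s+1}(\cdot,x) f^{s+1}_n(x)$ or $\sum_x f^s_n(x) v_{s,s+1}(x,\cdot)$ to a single term carrying a factor $c^s_n$, thereby building up the products $c^{k,l}_i$ step by step across the time interval.

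The two cases $k \ge l$ and $k < l$ reflect the two ways a pair of times can be ordered relative to the ``time direction'' in which the transition operators act, and the sign and the truncation of the summation range (over $i < N$ versus $i \ge N$) in the second case come from completing the propagator: for $k < l$ one must invert the direction of propagation, which produces the complementary projection $\sum_{i \ge N}$ together with the minus sign, exactly as in the standard extended-kernel formalism. I would verify these two formulas separately, taking care that the single-time case $k = l$ (with $c^{k,k}_i = 1$) recovers the Christoffel--Darboux-type kernel $\sum_{i=0}^{N-1} f^k_i(x) f^k_i(y)$ of Proposition 2, which serves as a consistency check.

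The main obstacle, I expect, will be the bookkeeping in the case $k < l$: producing the correct complementary sum $\sum_{i \ge N}$ and the correct sign requires carefully tracking how the biorthogonal dual functions behave when the summation range $\{0,\dots,L\}$ is only partially filled (the proposition explicitly allows $\{f^t_n\}$ to span only $l_2(L_t)$ for a subset $L_t$), and ensuring that the infinite-dimensional completion of the space does not introduce convergence issues. Making the Eynard--Mehta inversion fully rigorous in this setting, rather than formal, is the delicate point; everything else is the telescoping algebra of the constants $c^{k,l}_i$ and repeated use of orthonormality.
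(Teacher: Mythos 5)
Your approach is sound, but it is not the paper's: you are proposing to re-prove the Eynard--Mehta theorem, whereas the paper simply invokes it. The paper's entire proof is a reduction: \cite{EM} establishes exactly this statement under the extra hypothesis that $\{f^t_n\}$ is an orthonormal basis of the whole space, and the general case follows by extending each system $\{f^t_n\}$ to a full orthonormal basis and assigning the coefficient $c^t_i=0$ to every added vector. This changes nothing: the one-dimensional distributions involve only $f^t_0,\dots,f^t_{N-1}$, the added terms in $v_{t,t+1}$ vanish because they carry zero coefficients, and the added terms in the $k<l$ kernel vanish because each such $c^{k,l}_i$ contains a zero factor. In particular, the two issues you single out as the main obstacles --- producing the complementary sum $-\sum_{i\ge N}c^{k,l}_i f^k_i(x)f^l_i(y)$ with the correct sign, and the bookkeeping when $\{f^t_n\}$ spans only $l_2(L_t)$ --- are precisely what this one-line trick plus the citation dispose of. Your route, if carried out, does work: the telescoping you describe puts the joint law into the product-determinant form required by Eynard--Mehta, orthonormality gives the contraction $\sum_y v_{s,s+1}(x,y)f^{s+1}_n(y)=c^s_n f^s_n(x)$, hence the Gram matrix is diagonal with entries $c^{k,l}_i$ and the abstract kernel collapses to the stated one, the $k<l$ formula arising as the finite sum over $i<N$ minus the full propagator $\sum_{i\ge 0}c^{k,l}_i f^k_i(x)f^l_i(y)$. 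What your route buys is self-containedness (no appeal to \cite{EM}), but at the cost of redoing that paper's work, and as written your proposal stops short of doing so: the $k<l$ inversion and its rigor are flagged as delicate rather than executed. The paper's route buys a two-sentence proof at the cost of leaning entirely on the citation, and its only genuinely new content is the zero-coefficient extension trick that your plan replaces with case-by-case bookkeeping.
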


 This proposition is just a slight extension of the theorem proved by Einard and Metha
 in the paper
 \cite{EM}. In their article this proposition was proved under the assumption
 that the system  $\{f^t_n\}$ forms a basis of the whole space.
 But this assumption is not essential as we always can extend an orthonormal system to a
 basis of the whole space  and then set the coefficients
 $c^t_i$ corresponding to the extra vectors equal to zeros.
 This operation doesn't change neither transition probabilities nor one-dimensional distributions
 of the process.

 Another similar formulation of the proposition was used in  \cite{BO1}.

 It is noteworthy that the dynamical correlation kernel is not canonically defined. Only dynamical
 correlation functions are invariants of the process and they don't change when we transform
 the correlation kernel in the following way:
 $$ K^*(x,s;y,t)=K(x,s;y,t) \cdot \frac{F(x,s)}{F(y,t)},$$
 here $F$ is arbitrary function. It is evident that such transformation doesn't change the values of determinants.

 \begin{lemma} One-step transition probabilities of the process  $H_t$ can be computed by
formula
\begin{multline*}
P_{t,t+1}(x_1,\dots ,x_N;y_1,\dots ,y_N):=\\=
{\rm
Prob}\{H_{t+1}=(y_1,\dots ,y_N)\mid H_{t}=(x_1,\dots ,x_N)\}=\\=
 \frac{\prod\limits_{i<j}(y_j-y_i)
   \prod\limits_{i:\,y_i=x_i+1}(N+S-x_i-1)\prod\limits_{i:\,y_i=x_i}(x_i+T-t-S)}
 {(T-t)_{N}\cdot\prod\limits_{i<j}(x_j-x_i)},
\end{multline*}
provided that each difference $y_i-x_i$ is equal to zero or to one,
otherwise the probabilities are equal to zero.
\end{lemma}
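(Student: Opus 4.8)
The plan is to avoid a direct combinatorial count and instead derive the transition probability from the Markov property together with the path-count of Proposition~1. By the Markov property, $P_{t,t+1}(x;y)$ equals the number of full path families passing through $x=(x_1,\dots,x_N)$ at time $t$ \emph{and} through $y=(y_1,\dots,y_N)$ at time $t+1$, divided by the number of full families passing through $x$ at time $t$. Since non-intersection is a condition imposed time-slice by time-slice, both counts factor over the two fixed intermediate moments: a family splits independently into its restriction to $[0,t]$, its single link over $[t,t+1]$, and its restriction to $[t+1,T]$, and any compatible triple glues back (the shared slices $x$ and $y$ are ordered and force the pairing $x_i\leftrightarrow y_i$). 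Writing $L_t(x)$ for the count joining the sources $X_i$ to $(t,x_i)$, $M(x,y)$ for the one-step families from $(t,x_i)$ to $(t+1,y_i)$, and $R_t(x)$, $R_{t+1}(y)$ for the families joining $(t,x_i)$, resp.\ $(t+1,y_i)$, to the sinks $Y_i$, the numerator is $L_t(x)M(x,y)R_{t+1}(y)$ and the denominator is $L_t(x)R_t(x)$. The factor $L_t(x)$ cancels, leaving
$$P_{t,t+1}(x;y)=\frac{M(x,y)\,R_{t+1}(y)}{R_t(x)}.$$
(This is exactly the Doob $h$-transform of $N$ independent $0/1$-walks with harmonic function $R_t$.)

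Next I would evaluate the three counts by Proposition~1. A single link is horizontal or rises by one, so $M(x,y)=\det_{i,j=1,\dots,N}\left[\binom{1}{y_i-x_j}\right]$; since $\binom{1}{k}$ vanishes unless $k\in\{0,1\}$ and the ordered non-intersecting pairing sends $x_i$ to $y_i$, this determinant equals $1$ when every $y_i-x_i\in\{0,1\}$ and $0$ otherwise, which already accounts for the vanishing clause of the lemma. The remaining counts are, again by Proposition~1 with the consecutive sinks $S+i-1$,
$$R_t(x)=\det_{i,j=1,\dots,N}\left[\binom{T-t}{S+i-1-x_j}\right],\qquad R_{t+1}(y)=\det_{i,j=1,\dots,N}\left[\binom{T-t-1}{S+i-1-y_j}\right].$$

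Finally I would evaluate these two binomial determinants in closed form by the same tool used for $P_t$ in Section~1, namely Theorem~26 of \cite{Kr}. The evaluation has the shape (a Vandermonde in the free variables) times (a product of factorial factors): schematically $R_t(x)=\mathrm{const}_N\cdot\prod_{i<j}(x_j-x_i)\cdot\prod_{i=1}^N\big[(S+N-1-x_i)!\,(T-t-S+x_i)!\big]^{-1}$, with an $x$-independent constant $\mathrm{const}_N=\prod_{i=1}^N(T-t+N-i)!\big/\prod_{i=1}^N(N-i)!$. Forming the ratio $R_{t+1}(y)/R_t(x)$ and multiplying by $M(x,y)=1$, the Vandermondes produce $\prod_{i<j}(y_j-y_i)/\prod_{i<j}(x_j-x_i)$; the $x$-independent constants telescope into $1/(T-t)_N$; and because $y_i-x_i\in\{0,1\}$ each index leaves a single surviving factorial quotient that collapses to $T-t-S+x_i$ when $y_i=x_i$ and to $N+S-x_i-1$ when $y_i=x_i+1$, reproducing the stated formula. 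The main obstacle is precisely this determinant evaluation and the ensuing bookkeeping: one must confirm that the consecutive-sink structure yields the clean product above and, in particular, that the constants telescope to exactly the Pochhammer $(T-t)_N$ and that the per-index quotients reduce to the two advertised factors; verifying $M(x,y)=1$ and the exact vanishing pattern in the boundary cases is a secondary point to be checked with care.
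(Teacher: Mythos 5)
Your proposal is correct and takes essentially the same route as the paper: the paper's one-sentence proof is precisely this direct computation from the definition of conditional probability, using Proposition 1 for the path counts and the determinant evaluation from \cite{Kr}, which is what you carry out. Your key intermediate claims check out — $M(x,y)=1$ exactly when every $y_i-x_i\in\{0,1\}$, the closed form $R_t(x)=\prod_{i=1}^N\frac{(T-t+N-i)!}{(N-i)!}\prod_{i<j}(x_j-x_i)\prod_{i=1}^N\bigl[(S+N-1-x_i)!\,(T-t-S+x_i)!\bigr]^{-1}$, the telescoping of the constants to $1/(T-t)_N$, and the per-index quotients $T-t-S+x_i$ and $S+N-1-x_i$.
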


This formula can be checked by direct computation starting from the
definition and using Proposition 1 together with the determinantal
formula from \cite{Kr} used in Section 1.2.

 \begin{proposition}
  The process $H_t$ meets the assumptions of Proposition 3 and thus it is determinantal.
 \end{proposition}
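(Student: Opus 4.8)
The plan is to match every ingredient demanded by Proposition 3 with an object already attached to the process $H_t$, and then to verify the two structural hypotheses---the form of the one-dimensional distributions and the form of the one-step transition probabilities---by direct comparison. For the orthonormal systems I would simply take the functions $f_n^t(x)=H_n^t(x)\sqrt{w_t(x)}/\sqrt{(H_n^t,H_n^t)}$ introduced above, which form an orthonormal basis of $l_2(\mathfrak{X}_t)$; the index set $\mathfrak{X}_t$ plays the role of the subset $L_t$ allowed in Proposition 3, so no genuine basis of the whole space is needed.

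First I would dispose of the one-dimensional distributions. Since $H_n^t$ is a polynomial of degree $n$ with an explicitly known leading coefficient, the determinant $\det[f_{i-1}^t(x_j)]$ equals a constant depending only on $t$ (through the norms and the leading coefficients) times the Vandermonde $\prod_{i<j}(x_i-x_j)$ and $\prod_i\sqrt{w_t(x_i)}$. Squaring and invoking the orthonormality (equivalently, the Gram/Andr\'eief identity, which guarantees that $(\det[f_{i-1}^t(x_j)])^2$ is already a probability density on ordered $N$-tuples), I would identify this with the expression for $P_t$ obtained via Proposition 2. The squared Vandermonde and the product of weights appear on both sides, so only the constants must agree, and that is automatic because both are normalized.

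The heart of the proof is the transition probabilities. Here I would choose the constants $c_n^t$ so that the kernel $v_{t,t+1}(x,y)=\sum_{n\ge 0}c_n^t f_n^t(x)f_n^{t+1}(y)$ collapses to a simple expression supported only on $y\in\{x,x+1\}$. Evaluating this sum is a summation identity for products $H_n^t(x)H_n^{t+1}(y)$ of Hahn polynomials whose parameters $(\alpha,\beta,M)$ differ by the increment forced by the step $t\to t+1$ in cases (I)--(IV). Using the contiguous relations for Hahn polynomials alluded to in the introduction, I expect $v_{t,t+1}(x,y)$ to telescope to the form $v_{t,t+1}(x,y)=A_t(x)\,\delta_{y,x}+B_t(x)\,\delta_{y,x+1}$, with $A_t(x)$ and $B_t(x)$ proportional to the factors $(x+T-t-S)$ and $(N+S-x-1)$ of the one-step lemma. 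Because every admissible transition has $y_i\in\{x_i,x_i+1\}$ and the points are ordered, the matrix $[v_{t,t+1}(x_i,y_j)]$ is essentially triangular, so $\det[v_{t,t+1}(x_i,y_j)]=\prod_{i:\,y_i=x_i}A_t(x_i)\prod_{i:\,y_i=x_i+1}B_t(x_i)$.

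Finally I would assemble the pieces: substituting this factorized determinant together with the Vandermonde-and-weight expressions for $\det[f_{i-1}^{t+1}(y_j)]$ and $\det[f_{i-1}^t(x_j)]$ into the transition formula of Proposition 3 and comparing with the explicit one-step formula established in the preceding lemma. The ratio of the two determinants supplies the factor $\prod_{i<j}(y_j-y_i)/\prod_{i<j}(x_j-x_i)$; after the weights cancel, the denominator $\prod_{n=0}^{N-1}c_n^t$ must reproduce the normalization $(T-t)_N$, which fixes and checks the choice of the $c_n^t$. The main obstacle is precisely the explicit evaluation of $v_{t,t+1}$: one must keep track of the four parameter regimes, handle with care the times at which $\mathfrak{X}_t$ gains a point from above or loses one from below (so the two orthonormal systems live on different index sets), and invoke the exact contiguous relation that reduces the sum to two terms. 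Once that closed form is secured, the remaining identifications are routine bookkeeping with Pochhammer symbols and norms.
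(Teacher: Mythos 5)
Your outline coincides with the paper's own proof in every structural respect: the same orthonormal systems $f_n^t$, the same identification of the one-dimensional distributions with $(\det[f_{i-1}^t(x_j)])^2$, the same block-triangular evaluation of $\det[v_{t,t+1}(x_i,y_j)]$, and the same final comparison with the explicit one-step transition formula (the paper's Lemma 4). The problem is that the step you set aside as ``the main obstacle'' --- the closed form of $v_{t,t+1}$ --- is the entire mathematical content of the proposition, and the mechanism you propose for it is not the one that works. The sum $\sum_{n}c_n^t f_n^t(x)f_n^{t+1}(y)$ does not telescope. The paper's argument (its Lemma 8) rests on two distinct ingredients: first, a contiguous relation (relations $(*)$ and $(**)$ of its Lemma 6, proved by a direct ${}_3F_2$ manipulation) writing the time-$(t+1)$ Hahn polynomial at $y$ as a linear combination of the time-$t$ polynomials at $y$ and $y-1$ with coefficients \emph{independent of the degree} $k$; second, the dual orthogonality relation for Hahn polynomials (its Lemma 7), which states that $\sum_k Q_k(x)Q_k(y)/(Q_k,Q_k)$, suitably weighted, equals $\delta_{x,y}/w(x)$, and it is this relation --- i.e.\ completeness of the finite system $\{f_k^t\}$ in $l_2({\mathfrak X_t})$ --- that actually collapses the resulting sums into the two delta terms. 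Your sketch never invokes this second ingredient, and without it no manipulation of contiguous relations alone reduces the sum over all degrees $k$ to a two-term expression.

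There is also an existence question you treat as bookkeeping: for the completeness argument to apply, the constants $c_n^t$ must exactly compensate the $k$-dependence of the norm ratio, i.e.\ one needs $c_n^t\sqrt{(H_n^t,H_n^t)}/\sqrt{(H_n^{t+1},H_n^{t+1})}$ to be independent of $n$. That such constants exist is a special factorization property of the Hahn norms, verified in the paper by exhibiting the explicit choice $c^t_i=\sqrt{\left(1-\frac{i}{t+N}\right)\left(1-\frac{i}{T+N-t-1}\right)}$; it is not forced by, nor checkable from, the requirement that the product $\prod_{n=0}^{N-1}c_n^t$ reproduce the normalization $(T-t)_N$, since a single product constraint cannot determine $N$ individual factors. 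So the skeleton of your plan is exactly the paper's, but the lemma on which everything rests is missing, and the hints you give for proving it would not suffice as stated.
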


  Here the orthogonal system from the formulation of Proposition 3 is the system
 of functions  $f_n^t$ defined in Section 2.1 while
  coefficients $c^t_i$ are given by the following formula
  $$ c^t_i=\sqrt{\left(1-\frac{i}{t+N}\right)\left(1-\frac{i}{T+N-t-1}\right)}$$
 \begin{proof}
  We will need 3 lemmas.
 \begin{lemma}
  The following relations hold for the Hahn orthogonal polynomials:
  \begin{multline*}
   (*) \quad xQ_k(x-1;\alpha,\beta,M-1)+(N-x)Q_k(x;\alpha,\beta,M-1)
    =MQ_k(x;\alpha,\beta,M),
  \\
   (**) \quad xQ_k(x-1;\alpha +1,\beta -1,M)+(-x-\alpha-1)Q_k(x;\alpha+1,\beta-1,M)
    =\\=-(\alpha+1)Q_k(x;\alpha,\beta,M).
  \end{multline*}
  They are equivalent to the following relation for the function $_3F_2$:
  $$
  x\cdot _3F_2{{-k,a,-x+1} \choose {b,c+1}}(1)-(c+x)\cdot_3F_2{{-k,a,-x}\choose{b,c+1}}(1)
  =-c\cdot _3F_2{{-k,a,-x}\choose{b,c}}(1).
  $$
 \end{lemma}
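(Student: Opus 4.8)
The plan is to prove the single ${}_3F_2$ identity first and then recover both $(*)$ and $(**)$ as special cases, since the lemma already asserts this equivalence. Because the upper parameter $-k$ forces every series here to terminate, all the sums are finite and I can argue entirely at the level of individual coefficients, with no convergence concerns.

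First I would write each series as ${}_3F_2{ {-k,a,\ast} \choose {b,\ast} }(1)=\sum_{n\ge 0}\frac{(-k)_n(a)_n(\ast)_n}{(b)_n(\ast)_n\,n!}$ and reduce the claimed identity to a comparison of the coefficient of the common factor $\frac{(-k)_n(a)_n}{(b)_n\,n!}$ on the two sides. The two elementary Pochhammer rewritings I expect to need are $x\,(-x+1)_n=(x-n)(-x)_n$ and $\frac{n+c}{(c+1)_n}=\frac{c}{(c)_n}$. Using the first, the coefficient combination $x\frac{(-x+1)_n}{(c+1)_n}-(c+x)\frac{(-x)_n}{(c+1)_n}$ collapses to $\frac{(-x)_n}{(c+1)_n}\bigl[(x-n)-(c+x)\bigr]=-(n+c)\frac{(-x)_n}{(c+1)_n}$, and the second then turns this into $-c\,\frac{(-x)_n}{(c)_n}$, which is exactly the coefficient appearing on the right-hand side. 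Summing over $n$ gives the ${}_3F_2$ identity.

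Then I would obtain $(*)$ and $(**)$ by specialization, using $Q_k(x;\alpha,\beta,M)={}_3F_2{ {-k,-x,k+\alpha+\beta+1} \choose {-M,\alpha+1} }(1)$. For $(**)$ the correct choice is $a=k+\alpha+\beta+1$, $b=-M$, $c=\alpha+1$: the shift $c\mapsto c+1$ is precisely the passage from $\alpha$ to $\alpha+1$, compensated by $\beta\mapsto\beta-1$ so that the middle upper parameter $k+\alpha+\beta+1$ is preserved. The three series then become $Q_k(x-1;\alpha+1,\beta-1,M)$, $Q_k(x;\alpha+1,\beta-1,M)$ and $Q_k(x;\alpha,\beta,M)$, and $-(c+x)=-x-\alpha-1$, giving $(**)$ verbatim. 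For $(*)$ the shifted lower parameter must instead be the other one, so I would set $c=-M$, $b=\alpha+1$, $a=k+\alpha+\beta+1$; now $c\mapsto c+1$ is the passage $M\mapsto M-1$, the factor $-c$ becomes $M$, and $-(c+x)$ becomes $M-x$, recovering $(*)$ with the coefficient of its middle term equal to $M-x$.

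The computation is essentially mechanical, so there is no deep obstacle; the only real care needed is bookkeeping. I would make sure each specialization routes the single shifted lower parameter to the right slot ($\alpha+1$ for $(**)$, $-M$ for $(*)$) and that the induced change of $\alpha$ is always balanced by the opposite change of $\beta$, keeping $k+\alpha+\beta+1$ fixed across all three factors. I would also verify the identity $x\,(-x+1)_n=(x-n)(-x)_n$ at the degenerate small-$n$ terms before declaring the derivation complete.
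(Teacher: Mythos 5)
Your proof is correct and follows essentially the same route as the paper: a termwise comparison of coefficients in the terminating ${}_3F_2$ series, using exactly the two Pochhammer identities $x\,(-x+1)_n=(x-n)(-x)_n$ and $\frac{c}{(c)_n}=\frac{c+n}{(c+1)_n}$ (the paper applies the latter in the form $\frac{M}{(-M)_n}=\frac{M-n}{(-M+1)_n}$). The only organizational difference is that you prove the generic ${}_3F_2$ relation once and specialize it to both $(*)$ and $(**)$ (correctly reading the coefficient $N-x$ in $(*)$ as a typo for $M-x$), whereas the paper runs the identical computation directly for $(*)$ and then remarks that $(**)$ is equivalent via the symmetry of the lower parameters.
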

 \begin{proof}
 The following formula holds (see \cite{KC}):
 \begin{multline*}
  Q_k(x,\alpha,\beta,M)=
 \mathstrut_3F_2{{-k,-x,k+\alpha+\beta+1}\choose {-M,\alpha+1}}(1)\\=
  \sum_{i=0}^k\frac{(-k)_i(-x)_i(k+\alpha+\beta+1)_i}{(-M)_i
  (\alpha+1)_i i!}.
 \end{multline*}
 It implies
\begin{gather*}
   xQ_k(x-1;\alpha,\beta,M-1)+(N-x)Q_k(x;\alpha,\beta,M-1)
    -NQ_k(x;\alpha,\beta,M)=\\
 =\sum_{i=0}^k\frac{(-k)_i(k+\alpha+\beta+1)_i}{(\alpha+1)_ii!}
    \left[ x\frac{(-x+1)_i}{(-M+1)_i}+(M-x)\frac{(-x)_i}{(-M+1)_i}-
           M\frac{(-x)_i}{(-M)_i}\right]=\\
=\sum_{i=0}^k\frac{(-k)_i(-x)_i(k+\alpha+\beta+1)_i}{(-M+1)_i(\alpha+1)_ii!}
    \left[(x-k)+(M-x)-(M-k)\right]=0.
  \end{gather*}
 The second relation is readily seen to be equivalent to the one already proved.
 \end{proof}

 \begin{lemma}
 \begin{multline*}
  \sum_{k=0}^M\frac{(2k+\alpha+\beta+1)(\alpha+1)_{k}(-M)_{k}M!}
    {(-1)^k(k+\alpha+\beta+1)_{M+1}(\beta+1)_{k}k!}Q_k(x,\alpha,\beta,M)
        Q_k(y,\alpha,\beta,M)=\\
      =\frac{\delta_{x,y}}{
       \dfrac{(\alpha+1)_{x}(\beta+1)_{M-x}}{x!(M-x)!}}
 \end{multline*}
 \end{lemma}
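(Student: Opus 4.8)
The plan is to recognize this identity as the \emph{dual} (completeness) orthogonality relation for the Hahn polynomials $Q_k(\cdot,\alpha,\beta,M)$ and to deduce it from their ordinary orthogonality relation via the elementary observation that a finite square matrix with orthonormal columns automatically has orthonormal rows.

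First I would recall the standard orthogonality relation from \cite{KC}, namely
$$\sum_{x=0}^M w(x)\,Q_k(x,\alpha,\beta,M)\,Q_l(x,\alpha,\beta,M)=\delta_{kl}\,(Q_k,Q_k),\qquad w(x)=\frac{(\alpha+1)_x(\beta+1)_{M-x}}{x!(M-x)!},$$
where $(Q_k,Q_k)$ is the squared norm written out in Section 2.1 (with the degree parameter being $M$). The structural fact that makes the whole argument work is that for the Hahn family both indices run over the \emph{same} finite index set: the weight $w$ is supported on the $M+1$ points $x=0,\dots,M$, and there are exactly $M+1$ polynomials $Q_0,\dots,Q_M$.

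Next I would form the $(M+1)\times(M+1)$ matrix $U$ with entries
$$U_{x,k}=\frac{\sqrt{w(x)}\,Q_k(x,\alpha,\beta,M)}{\sqrt{(Q_k,Q_k)}}.$$
In terms of $U$ the orthogonality relation reads $\sum_{x=0}^M U_{x,k}U_{x,l}=\delta_{kl}$, i.e. $U^{\top}U=I$. Because $U$ is square, this forces $UU^{\top}=I$ as well, that is $\sum_{k=0}^M U_{x,k}U_{y,k}=\delta_{xy}$.

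Finally I would unfold this relation: substituting the definition of $U_{x,k}$ gives
$$\sqrt{w(x)w(y)}\sum_{k=0}^M\frac{Q_k(x,\alpha,\beta,M)\,Q_k(y,\alpha,\beta,M)}{(Q_k,Q_k)}=\delta_{xy},$$
and dividing by $\sqrt{w(x)w(y)}$ — which equals $w(x)$ on the diagonal $x=y$, the only place the right side is nonzero — yields exactly the asserted identity, once $1/(Q_k,Q_k)$ is written out from the norm formula. The only genuinely delicate point is the passage from column-orthogonality ($U^{\top}U=I$) to row-orthogonality ($UU^{\top}=I$), which is legitimate precisely because $U$ is square; this is where it matters that the number of polynomials equals the number of support points of $w$. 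The remainder is bookkeeping: verifying that the coefficient $\frac{(2k+\alpha+\beta+1)(\alpha+1)_k(-M)_k M!}{(-1)^k(k+\alpha+\beta+1)_{M+1}(\beta+1)_k k!}$ in the statement is the reciprocal of $(Q_k,Q_k)$ (a short Pochhammer computation, with $(-M)_k=(-1)^kM!/(M-k)!$ confirming the sign and the positivity of the norm), and that $1/w(x)$ is the stated right-hand side.
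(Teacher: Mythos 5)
Your proof is correct, but it relates to the paper in an unusual way: the paper does not prove this lemma at all --- it simply names the identity (``dual orthogonality relation'') and cites the Koekoek--Swarttouw compendium \cite{KC}, where it is recorded. What you supply is the standard self-contained derivation: since the Hahn family has exactly as many polynomials ($k=0,\dots,M$) as the weight has support points ($x=0,\dots,M$), the matrix $U_{x,k}=\sqrt{w(x)}\,Q_k(x)/\sqrt{(Q_k,Q_k)}$ is square, so column orthonormality $U^{\top}U=I$ (ordinary orthogonality) forces row orthonormality $UU^{\top}=I$ (dual orthogonality), and unwinding the entries plus checking that the stated coefficient equals $1/(Q_k,Q_k)$ finishes the job. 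This buys self-containedness at essentially no cost and isolates the one structural fact that makes duality work, namely squareness of $U$. One caveat is worth fixing: in the regime where the paper actually applies this lemma, the parameters are far outside the classical range $\alpha,\beta>-1$ (e.g. $\alpha=-S-N$), so $w(x)$ has the constant sign $(-1)^M$ rather than being positive, and $\sqrt{w(x)}$ need not be real. Your argument is cleanest there if you drop the square roots: write orthogonality as $PWP^{\top}=D$ with $P_{k,x}=Q_k(x)$, $W=\mathrm{diag}(w(x))$, $D=\mathrm{diag}((Q_k,Q_k))$; then $D^{-1}PW$ is a one-sided, hence (for square matrices) two-sided, inverse of $P^{\top}$, and $P^{\top}D^{-1}PW=I$ is exactly the dual relation, with no positivity assumptions beyond $w(x)\neq 0$ and $(Q_k,Q_k)\neq 0$. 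Alternatively, observe that both sides of the identity are rational functions of $\alpha$ and $\beta$, so the classical-parameter case extends to the general one by continuation. With either patch your proof is complete.
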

 This formula is called ``dual orthogonality relation'' and can be found in
\cite{KC}.

 \begin{lemma}
 \begin{multline*}
 \sqrt{\frac{w_{t+1}(y)}{w_t(x)}}\sum_{k\ge 0}c_k^t
        f_k^t(x)f_k^{t+1}(y)=\\=
  \frac{S+N-1-x}{\sqrt{(t+N)(T+N-t-1)}}\delta_{x+1}^y+
  \frac{T-t-S+x}{\sqrt{(t+N)(T+N-t-1)}}\delta_x^y.
 \end{multline*}
 Here coefficients $c_i^t$ and functions $f_k^t(x)$ were defined in the formulation of Proposition 5, $w_t(x)$ is the
 weight function of the Hahn orthogonal polynomials.

 \end{lemma}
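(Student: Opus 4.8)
The plan is to reduce the left-hand side to an expression to which the dual orthogonality relation applies, after using the contiguous relations $(*)$ and $(**)$ to force the two Hahn polynomials $H_k^t$ and $H_k^{t+1}$ into a common family. First I would substitute the definitions of $f_k^t$ and $f_k^{t+1}$. The factor $\sqrt{w_t(x)}$ in $f_k^t(x)$ cancels the $\sqrt{1/w_t(x)}$ out front, while $\sqrt{w_{t+1}(y)}$ from $f_k^{t+1}(y)$ combines with the remaining $\sqrt{w_{t+1}(y)}$, leaving
\[ w_{t+1}(y)\sum_{k\ge 0}\frac{c_k^t\,H_k^t(x)\,H_k^{t+1}(y)}{\sqrt{(H_k^t,H_k^t)\,(H_k^{t+1},H_k^{t+1})}}. \]

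Next I would exploit the fact that passing from $t$ to $t+1$ changes exactly one of the parameter triples $(\alpha,\beta,M)$ by a unit shift: in cases (I) and (IV) only $M$ moves by one, whereas in cases (II) and (III) only the pair $(\alpha,\beta)$ shifts by $(\mp1,\pm1)$. Hence in each regime the corresponding relation from the previous lemma applies — $(*)$ when $M$ shifts, $(**)$ when $(\alpha,\beta)$ shifts — and lets me rewrite $H_k^{t+1}(y)$ as a two-term combination $A(y)H_k^t(y-1)+B(y)H_k^t(y)$ whose coefficients $A,B$ do \emph{not} depend on $k$. This $k$-independence is the essential feature of the relations $(*)$, $(**)$: after the substitution the summation index survives only inside products $H_k^t(x)H_k^t(y-1)$ and $H_k^t(x)H_k^t(y)$ of polynomials drawn from the single time-$t$ family. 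Since cases (I),(II) and (III),(IV) give coinciding hypergeometric expressions, effectively only two computations are required here.

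The decisive step is to check that the remaining $k$-dependent prefactor collapses. I would compute the ratio of squared norms $(H_k^{t+1},H_k^{t+1})/(H_k^t,H_k^t)$ from the explicit norm formula of Section 2.1 and verify that
\[ \frac{c_k^t}{\sqrt{(H_k^t,H_k^t)\,(H_k^{t+1},H_k^{t+1})}}=\frac{\kappa}{(H_k^t,H_k^t)} \]
for a constant $\kappa$ independent of $k$. This is precisely the identity that the choice $c_k^t=\sqrt{\left(1-\frac{k}{t+N}\right)\left(1-\frac{k}{T+N-t-1}\right)}$ is engineered to satisfy: the two factors under the root are tailored to cancel the $k$-dependent Pochhammer ratios (of the type $(M-k)/M$) produced by the unit shift of $M$, $\alpha$, or $\beta$ in the norm. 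I expect this norm bookkeeping to be the main obstacle, since it requires tracking several ratios of Pochhammer symbols across the parameter shift and confirming that they reassemble into exactly $c_k^t$.

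Once the prefactor is constant, the sum over $k$ is governed by the dual orthogonality relation, which gives $\sum_{k}(H_k^t,H_k^t)^{-1}H_k^t(x)H_k^t(y')=\delta_{x,y'}/w_t(x)$ for $y'\in\{y-1,y\}$. Substituting this and using the Kronecker deltas to force $y=x+1$ or $y=x$, the surviving prefactor $w_{t+1}(y)/w_t(x)$ becomes the explicit weight ratios $w_{t+1}(x+1)/w_t(x)$ and $w_{t+1}(x)/w_t(x)$, which I would evaluate directly from the formula for $w_t$. Multiplying through by $\kappa$ and the coefficients $A,B$, I would verify that the $\delta_{x+1}^y$ and $\delta_x^y$ terms reduce respectively to $(S+N-1-x)/\sqrt{(t+N)(T+N-t-1)}$ and $(T-t-S+x)/\sqrt{(t+N)(T+N-t-1)}$. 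As a consistency check, these are exactly the single-particle hopping weights $N+S-x-1$ and $x+T-t-S$ appearing in the one-step transition probabilities of Lemma 4, rescaled by the normalization carried in $c_k^t$, which is the answer one should expect.
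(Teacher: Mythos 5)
Your outline coincides with the paper's: substitute the definitions of $f_k^t$, use the contiguous relations $(*)$, $(**)$ to force both Hahn polynomials into a single family, check that $c_k^t$ collapses the $k$-dependent norm factors, and finish with dual orthogonality and weight-ratio bookkeeping. However, there is a genuine gap in how you treat cases (III) and (IV): the \emph{direction} of the contiguous relations. You claim that in every regime the relevant relation ``lets me rewrite $H_k^{t+1}(y)$ as a two-term combination $A(y)H_k^t(y-1)+B(y)H_k^t(y)$.'' That is true only in cases (I) and (II), where the moving parameter goes the right way ($M\to M+1$, resp. $\alpha\to\alpha-1$, $\beta\to\beta+1$). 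In cases (III) and (IV) the parameter moves the opposite way ($\alpha\to\alpha+1$, $\beta\to\beta-1$, resp. $M\to M-1$), and moreover the coordinate shift $x'=x+T-t-S$ itself changes with $t$. There the relations read the other way around; e.g. in case (IV), relation $(*)$ evaluated at $x'=x+T-t-S$ with $M=T+N-t-1$ gives
\begin{equation*}
x'\,H_k^{t+1}(x)+(M-x')\,H_k^{t+1}(x+1)=M\,H_k^{t}(x),
\end{equation*}
expressing the \emph{time-$t$} polynomial through the time-$(t+1)$ ones, and it cannot be inverted into a two-term, $k$-independent expression of $H^{t+1}$ through $H^t$. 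This is exactly the distinction the paper's own proof makes: expand $f_k^{t+1}$ in the time-$t$ family in cases (I),(II), but expand $f_k^t$ in the time-$(t+1)$ family in cases (III),(IV).

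This is not a cosmetic slip, because your ``decisive step'' fails with it. The identity $c_k^t\big/\sqrt{(H_k^t,H_k^t)(H_k^{t+1},H_k^{t+1})}=\kappa\big/(H_k^t,H_k^t)$, i.e. $c_k^t=\kappa\sqrt{(H_k^{t+1},H_k^{t+1})/(H_k^t,H_k^t)}$, holds in cases (I),(II): there the norm formula gives, up to sign and for suitable constant $\kappa$, $(H_k^{t+1},H_k^{t+1})/(H_k^t,H_k^t)\propto(t+N-k)(T+N-t-1-k)\propto (c_k^t)^2$. But in cases (III),(IV) the same computation yields the \emph{reciprocal} $k$-dependence, $(H_k^{t+1},H_k^{t+1})/(H_k^t,H_k^t)\propto\left[(t+N-k)(T+N-t-1-k)\right]^{-1}$, so $c_k^t$ is proportional to $\sqrt{(H_k^t,H_k^t)/(H_k^{t+1},H_k^{t+1})}$ and your prefactor does not become constant; the sum over $k$ then never reduces to a dual orthogonality sum. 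The repair is to substitute $H_k^t(x)=\left[x'H_k^{t+1}(x)+(M-x')H_k^{t+1}(x+1)\right]/M$ and apply dual orthogonality in the time-$(t+1)$ family: the products $H_k^{t+1}(\cdot)H_k^{t+1}(y)$ produce $\delta_{x,y}/w_{t+1}(y)$ and $\delta_{x+1,y}/w_{t+1}(y)$, the weight prefactor $w_{t+1}(y)$ cancels completely, and since $M-x'=S+N-1-x$ and $x'=T-t-S+x$, the claimed right-hand side appears directly. With this reversal in cases (III),(IV), the rest of your argument goes through.
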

 \begin{proof}
  To prove this it suffices to recall the definition of
  $f_k^t(x)$, substitute the value of the weight function and
  all parameters, then in cases (I) and (II) express $f_k^{t+1}(y)$ in terms of $f_k^t(y)$
  and $f_k^t(y-1)$ using the relation  (*), and finally use the last lemma.
  In the cases (III) and (IV) one should express  $f_k^t(x)$ in terms of
  $f_k^{t+1}(x)$ and $f_k^{(t+1)}(x+1)$ using (**) and again use
  the last lemma.
 \end{proof}

 Let us return to the proof of the proposition.

 The fact that the one-dimensional distributions can be expressed in the form that is required for
 the application  of Proposition 5 follows from Proposition 2, where one takes
 $k=N$  (actually, in this case Proposition 2 is quite obvious) and the following observation:
 \begin{multline*}(\det_{i,j=1,\dots ,N}\left[f_{i-1}^t(x_j)\right])^2=
    \det_{i,j=1,\dots ,N}\left[f_{i-1}^t(x_j)\right]\cdot
    \det_{i,j=1,\dots ,N}\left[f^t_{j-1}(x_i)\right]\\=
    \det\left(\left[f^t_{i-1}(x_j)\right]\cdot \left[f^t_{j-1}(x_i)\right]\right)=
    \det_{i,j=1,\dots ,N}\left[K(x_j,x_i)\right]\end{multline*}
 (here $K(x,y)$ is the kernel defined in Propostition 2).

 It remains to show that the transition probabilities also have the required form, i.e:
  $$P_{t,t+1}(x_1,\dots ,x_N;y_1,\dots ,y_N)=\frac{\det\left[v_{t,t+1}(x_i,y_j)\right]
 \det\left[f_{i-1}^{t+1}(y_j)\right]} {\det\left[f_{i-1}^t(x_j)\right]
  \prod\limits_{n=0}^{N-1}c^t_n} $$
 (all matrices under the sign of determinant have order $N\times N$ and we will not indicate
 this in the sequel).
 Here
  $$v_{t,t+1}(x,y)=\sum_{k\ge 0}c^t_k f^t_k(x)f^{t+1}_k(y).$$
 By virtue of already proved facts we obtain:
 \begin{multline*}
   v_{t,t+1}(x,y)=\\=\sqrt{\frac{w_t(x)}{w_{t+1}(y)}}
  \left[\frac{S+N-1-x}{\sqrt{(t+N)(T+N-t-1)}}\delta_{x+1}^y+
  \frac{T-t-S+x}{\sqrt{(t+N)(T+N-t-1)}}\delta_x^y\right]\end{multline*},
 $$\det\left[f_{i-1}^{t+1}(y_j)\right]=\sqrt{P_{t+1}(y_1,\dots,y_N)},$$
 $$\det\left[f_{i-1}^{t}(x_j)\right]=\sqrt{P_t(x_1,\dots,x_N)}.$$
 Substituting the expressions of the weight function and of the probability of
 the $N$--tuple we get:
 $$
  \frac{\det\left[v_{t,t+1}(x_i,y_j)\right]
 \det\left[f_{i-1}^{t+1}(y_j)\right]} {\det\left[f_{i-1}^t(x_j)\right]
  \prod\limits_{i=0}^{N-1}c_i^t}
 =$$ $$=
 \frac{ \prod\limits_{i=1}^N
          \sqrt{\frac{w_{t+1}(y_i)}{w_t(x_i)}
           \frac{(t+i)(T-t)(t+1)^N}{(t+1)(T-t+i-1)(T-t)^N}
          }
        \det\left[v_{t,t+1}(x_i,y_j)\right]
  }
  {\prod\limits_{i=0}^{N-1}c_i^t}
 \prod\limits_{i<j}\frac{y_j-y_i}{x_j-x_i}
 =$$ $$=
 \frac{\det\left[(S+N-x-1)\delta_{x_i+1}^{y_j}+(T-t-S+x)\delta_{x_i}^{y_j}\right]}
          {(T-t)_{N}}\prod\limits_{i<j}\frac{y_j-y_i}{x_j-x_i}.
 \eqno (***)
 $$

 Let us assume that $x_i$ and $y_i$ are arranged in the increasing order.

 There are two cases: either every difference $y_i- x_i$
 is equal 0 or 1 or this is not true.

 In the latter case the matrix under the sign of determinant has a zero row or column.
 Consequently, the determinant is equal to zero. On the other hand, the transition
 probability is equal to zero, too.

 In the former case the matrix under the sign of determinant is block-diagonal.
 Moreover, its zeros are arranged in such a way
 that every block is either an upper triangular or lower triangular matrix.
 We may conclude that the determinant of the matrix is equal to the product of
 its  diagonal elements, and
 the value of  $(***)$ is equal to the transition probability of the process
  $H_t$ computed in Lemma 4.

 Hence the process meets the conditions of Proposition 3 and we conclude that the process is determinantal.
 \end{proof}

\section{Passing to the limit in the correlation kernels: the operator method}
\subsection{Statement of the result}
 We are interested in the limit distribution (limit correlation kernel) in the following limit regime:

 Let us fix numbers  $\tilde S$, $\tilde T$, $\tilde N$, $\tilde t$, $\tilde x$.
 Assume that auxiliary parameter  $\rho \to \infty$ and
 $$
 \begin{array}{llll}
   S=\rho \tilde S + o(\rho),&
   T=\rho \tilde T + o(\rho),&
   N=\rho \tilde N + o(\rho),&
   t=\rho \tilde t + \hat t,\\
   s=\rho \tilde t + \hat s,&
   x=\rho \tilde x + \hat x,&
   y=\rho \tilde x + \hat y.
  \end{array}
  $$

  This limit has a simple geometrical interpretation:
  we fix the proportions between the parameters
   $\tilde T$, $\tilde S$, $\tilde N$ of the hexagon containing the paths
  and a point  $(\tilde t, \tilde x)$ inside it. Further we
  enlarge the hexagon by means of homothety with center at
  $(\tilde t,\tilde x)$ and coefficient
  equal to  $\rho$. Thus, the number of the paths increases, and we are
  focusing on the local picture near the point $(\tilde t,\tilde x)$.

  Recall the formula for the pre-limit kernel:
  $$
  \begin{array}{ll}
   K(x,s;y,t)=&
   -\sum\limits_{i\ge N}\left(\prod\limits_{j=s}^{t-1}c^j_i\right)f_i^s(x)f_i^t(y)
   ,\quad s<t\\
  &\sum\limits_{i=0}^{N-1}\left(\prod\limits_{j=t}^{s-1}\frac{1}{c^j_i}\right)f_i^s(x)f_i^t(y)
   ,\quad s\ge t.
  \end{array}
  $$
  The functions $f_i^s(x)$ and the coefficients $c_i^j$ were defined in Section 2.

  \begin{theorem}
  Let all the parameters of the original model
 ($S,T,N,t,s,x,y$) tend to infinity  in such a way that
  $$
   \begin{array}{llll}
   S=\rho \tilde S + o(\rho),&
   T=\rho \tilde T + o(\rho),&
   N=\rho \tilde N + o(\rho),&
   t=\rho \tilde t + \hat t,\\
   s=\rho \tilde t + \hat s,&
   x=\rho \tilde x + \hat x,&
   y=\rho \tilde x + \hat y,
  \end{array}
  $$
  where the auxiliary parameter $\rho \to \infty$.
  Then the limit point process on ${\mathbb Z^2}$ exists.
  It is determinantal and has translation-invariant kernel
  $$K(\hat x,\hat s;\hat y,\hat t)= \frac{1}{2\pi i}\oint_{e^{-i\phi}}^{e^{i\phi}}
  \left(1+cw\right)^{\hat t-\hat s}w^{\hat x-\hat y-1}dw. $$
  Here the integration is to be performed over the right side of the unit
  circle when
   $s\ge t$ and over the left side otherwise,

 $$ c= \sqrt{\frac{\tilde x(\tilde S+\tilde N-\tilde x)}{(\tilde T-\tilde t-\tilde S+\tilde x)(\tilde t+\tilde N-\tilde
x)}},
 $$
 and angle $\phi$ is given by formula:
 $$\phi=
   \arccos\frac{-\tilde N(\tilde N+\tilde T)+(-\tilde x+\tilde S+\tilde N)
   (\tilde t+\tilde N-\tilde x)+
           \tilde x(\tilde T+\tilde x-\tilde S-\tilde t)}
   {2\sqrt{\tilde x(-\tilde x+\tilde S+\tilde N)
      (\tilde t+\tilde N-\tilde x)(\tilde x+\tilde T-\tilde S-\tilde t)}}.
 $$
 If the expression under the arc cosine sign is greater than 1, then we set $\phi=0$. If the expression
 is less than $-1$, then $\phi=\pi$.
 \end{theorem}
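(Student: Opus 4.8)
The plan is to follow the operator method of \cite{BO2}: realize the prelimit kernel as a matrix element of an explicit operator built from the Hahn transforms, use the relations of Lemmas 6 and 8 to telescope this operator into a product of elementary difference operators together with one spectral projection, and then read off the bulk limit by freezing the (slowly varying) coefficients and diagonalizing by the Fourier transform.

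First I would encode the Hahn functions in the isometries $\mathcal F_t\colon l^2(\mathfrak X_t)\to l^2(\mathbb Z_{\ge0})$ with matrix entries $f^t_n(x)$; orthonormality of $\{f^t_n\}$ makes each $\mathcal F_t$ isometric. Writing $P_N$ for the projection onto degrees $n<N$ and $D_{c^j}$ for the diagonal (in $n$) operator of the coefficients $c^j_i$, the prelimit formula becomes $K(x,s;y,t)=\langle\delta_y,\mathcal F_t^{*}P_N\prod_{j}D_{1/c^j}\,\mathcal F_s\,\delta_x\rangle$ for $s\ge t$, and $-\langle\delta_y,\mathcal F_t^{*}(1-P_N)\prod_{j}D_{c^j}\,\mathcal F_s\,\delta_x\rangle$ for $s<t$. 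The content of Lemma 8 is that the one-step operator $B_j:=\mathcal F_{j+1}^{*}D_{c^j}\mathcal F_j$ is an \emph{explicit two-term difference operator} in the space variable, dressed by the weight ratio $\sqrt{w_j/w_{j+1}}$. Since $P_N$ and the $D_{c^j}$ are all diagonal in $n$, I can factor the projection out at the time-$t$ end and insert the resolutions $\mathcal F_j\mathcal F_j^{*}=\mathrm{Id}$ between consecutive diagonal factors; this telescopes the kernel into $\Pi_t\,B_t^{-1}B_{t+1}^{-1}\cdots B_{s-1}^{-1}$ when $s\ge t$ and into $-(1-\Pi_t)\,B_{t-1}B_{t-2}\cdots B_s$ when $s<t$, where $\Pi_t:=\mathcal F_t^{*}P_N\mathcal F_t$. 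By the Hahn difference equation (\cite{KC}) the degree operator is conjugate through $\mathcal F_t$ to an explicit second-order difference operator $\mathcal L_t$ in $x$, so $\Pi_t$ is the spectral projection of $\mathcal L_t$ onto the Hahn modes below the Fermi level $N$ (morally $\mathbf 1_{\mathcal L_t<\lambda_N}$ on the relevant part of the spectrum).

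Next I would pass to the limit. Under the stated scaling the coefficients of $B_j$ and of $\mathcal L_t$ vary slowly in $x$ across the $O(1)$ window around $\rho\tilde x$, so after the gauge transformation $K\mapsto K\cdot F(x,s)/F(y,t)$ of Section 2.2 (which leaves the correlation functions unchanged) that absorbs the weight ratios, both families freeze to translation-invariant operators. Representing the unit shift as multiplication by $w$ under the Fourier transform $g\mapsto\sum_y g(y)w^{-y}$ on the unit circle, each frozen $B_j$ becomes multiplication by a scalar $\propto(1+cw)$, the limiting ratio of its off-diagonal to diagonal coefficient being precisely the displayed $c$ once the gauge is taken into account; hence the telescoped product becomes multiplication by $(1+cw)^{\hat t-\hat s}$ in both cases. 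Simultaneously the frozen $\mathcal L_t$ becomes multiplication by a real symbol of the form $\beta_0+2\sqrt{\beta_1\beta_{-1}}\cos\theta$ on $w=e^{i\theta}$, so $\Pi_t$ becomes multiplication by the indicator of the arc $\{|\theta|<\phi\}$, where $\phi$ solves (frozen symbol)$=\lambda_N$; solving this last equation produces exactly the stated $\arccos$, with the degenerate conventions $\phi=0$ and $\phi=\pi$ corresponding to the Fermi level leaving the spectrum. Inverting the Fourier transform then yields $\frac{1}{2\pi i}\oint(1+cw)^{\hat t-\hat s}w^{\hat x-\hat y-1}\,dw$ over that arc: the projection $\Pi_t$ selects the right half-circle $(s\ge t)$ and the complementary $-(1-\Pi_t)$ selects the left half-circle $(s<t)$, the sign and orientation matching the stated contour.

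The main obstacle is the analytic justification of these freezing limits. The telescoping and the Fourier diagonalization are algebraic and routine, but passing from the spectral projection of the variable-coefficient operator $\mathcal L_t$ to that of its constant-coefficient model near $\rho\tilde x$ is a genuinely semiclassical statement: one must control the Hahn functions $f^t_n(x)$ uniformly for $x$ near $\rho\tilde x$ and $n$ near the Fermi level, and show that the kernel of $\Pi_t$ converges, on the $O(1)$ window and uniformly in $\hat x,\hat y$, to that of the frozen projection with an $o(1)$ error. For $s<t$ this is compounded by the need to justify convergence and localization of the \emph{infinite} sum $\sum_{i\ge N}$, which requires decay estimates on $f^t_i$ and on $\prod_j c^j_i$ for large $i$. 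This is precisely the point at which the naive integral-representation method fails, and where the adaptation of the \cite{BO2} machinery to the present dynamical, varying-weight setting does the real work.
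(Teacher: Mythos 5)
Your skeleton coincides with the paper's: the static kernel is the matrix of the spectral projection of the Hahn difference operator onto the window containing its first $N$ eigenvalues; the dynamical kernel is that projection (or minus its complement) composed with a product of the one-step bidiagonal transfer operators of Lemma 8; everything is frozen and diagonalized by the Fourier transform $l_2(\mathbb Z)\to L_2(S^1)$, each time step contributing a multiplication by $\mathrm{const}\cdot(1+cw)$ and the projection becoming the indicator of an arc, whence the stated $c$, $\phi$ and contour integral. The problem is that the step you label the ``main obstacle'' and leave open is the actual content of the proof, and the tools you propose for it --- uniform (semiclassical) control of $f_n^t(x)$ for $x$ near $\rho\tilde x$ and $n$ near the Fermi level, plus decay estimates for the tail $\sum_{i\ge N}$ --- are exactly the hard route this method is designed to avoid, essentially the Riemann--Hilbert asymptotics of \cite{Baik-Kriecherbauer-McLauphlin-Miller}, which in any case exist only for static ensembles. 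The paper closes the step with soft functional analysis and no polynomial asymptotics whatsoever: the operators $H_\rho$ are tridiagonal with entrywise convergent coefficients, hence converge strongly on the dense domain $l_2^0(\mathbb Z)$ of finitely supported vectors; for self-adjoint operators this yields strong resolvent convergence (\cite{RS}, Theorem VIII.25); the limit operator is, up to an affine transformation, the free Jacobi operator, i.e.\ multiplication by $\Re z$ after Fourier transform, so its spectrum is purely continuous, and Theorem VIII.24 of \cite{RS} then gives strong convergence of the spectral projections onto the converging segments. Convergence of the kernels is then just convergence of the matrix elements $\langle\delta_y,P_\rho\delta_x\rangle$. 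Likewise no tail estimates are needed when $s<t$: the prelimit operator is the product ${\cal P}_t U_{t-1}\cdots U_s$, each factor has norm at most $1$ (since $0\le c_i^j\le 1$), each converges strongly, and operator multiplication is strongly continuous on norm-bounded sets.

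The second concrete defect is your telescoped formula $\Pi_t B_t^{-1}\cdots B_{s-1}^{-1}$ for $s\ge t$: the operators $B_j=\mathcal F_{j+1}^{*}D_{c^j}\mathcal F_j$ are not invertible. Some coefficients $c_i^j$ vanish (whenever a polynomial disappears between times $j$ and $j+1$), the $\mathcal F_j$ are isometries onto proper subspaces (so also $\mathcal F_j\mathcal F_j^{*}\neq\mathrm{Id}$, which your insertion step uses), and, decisively, the limiting one-step symbol $\sqrt{\cdots}+(1/z)\sqrt{\cdots}$ vanishes at $z=-1$, so even the frozen operator admits no bounded inverse on $L_2(S^1)$. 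The paper needs a dedicated argument here (Lemma 9): if $\|A_\rho\|$, $\|B_\rho\|$ are uniformly bounded, $A_\rho\to A$ and $B_\rho A_\rho\to P$ strongly, with $P$ an orthoprojection whose image $E$ is $A$-invariant and on which $A$ is invertible, then $B_\rho\to A_E^{-1}$ strongly on $E$; combined with ${\cal R}_\rho\to 0$ on $E^{\perp}$ this settles the case $s>t$. The hypothesis is verified precisely because the limiting arc $\{|\arg z|<\phi\}$ omits the point $z=-1$: one inverts only after passing to the limit, and only on the range of the limit projection. Without this lemma, or an equivalent replacement, your $s\ge t$ case does not close.
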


 Existence of the process given by the limit correlation kernels follows easily from the convergence of
 these functions. The proof of this fact can be found for instance
 in \cite{Bor} (Lemma 4.1).

 The proof of the convergence of the correlation kernels will be made in two steps.

 \subsection{Statical case}
 First assume that $\hat s-\hat t=0$, i.e. we are investigating only the ``statical'' distribution
 in some fixed time moment.

 We use the method first proposed by A.~Borodin and G.~Olshanski in
 the paper \cite{BO2}.

 We extend the functions  $f_i^t(x)$ belonging to the space
 $l_2({\mathfrak X_t})$ to the functions defined on  $\mathbb Z$ by setting them equal to 0
 outside $\mathfrak X_t$.  Now we may regard all the functions $f_i^t$ as belonging to
 one and the same space  $l_2({\mathbb Z})$.

 We have
 $$K_t(x,y)=\sum\limits_{i=0}^{N-1}f_i^t(x)f_i^t(y).$$
 Observe that $K_t(x,y)$ is the matrix element of the projection operator
 onto the subspace spanned by the first  $N$ orthogonal functions $f_i^t$, $i=0,\dots,N-1$,
 in the space $l_2({\mathbb Z})$.

 It turns out that finding the limit of the operator is easier than
 computing the limit of the matrix elements. Note that functions
  $f_n^t(x)$ are eigenvectors of some difference operator (it will be explicitly given below).
  The projection operator can be regarded as the spectral projection on the segment containing
 the first $N$ eigenvalues of the difference operator under investigation.
 Now, to find the limit of the spectral projection operators we will take the limit of the difference
 operators. Note that both the difference operator and the spectral
 segment are varying simultaneously.

 To justify the limit transition we use some theorems from functional analysis.

 Consider the set  $l_2^0({\mathbb Z})$ of the finite vectors from
 $l_2({\mathbb Z})$ (i.e. algebraic span of the basis elements $\delta_x$)
 as a common essential domain of all considered difference
 operators.
 It will be clear from the following that the difference operators strongly converge
 on this domain.
 It follows that operators converge in the strong resolvent sense  (see \cite{RS}, Theorem VIII.25).
 The last fact, continuity of the spectrum of the limit operator and Theorem
 VIII.24 from \cite{RS} imply that the spectral projections associated with the difference
 operators strongly converge on the set of finite vectors to the limit
 spectral projection associated with the limit difference operator.

 Now we will carry the explicit computation through.

 The following difference relation holds for the Hahn polynomials (see \cite{KC}):
 \begin{multline*}
   k(k+\alpha+\beta+1)Q_k(x';\alpha,\beta,M)=
 (x'+\alpha+1)(M-x)Q_k(x'-1;\alpha,\beta,M)+\\+
 ((x'+\alpha+1)(M-x)+x'(x'-\beta-M-1))Q_k(x';\alpha,\beta,M)+\\  +x'(x'-\beta-M-1)Q_k(x'+1;\alpha,\beta,M)
 .\end{multline*}

 Let us rewrite this relation in the terms of orthonormal functions
$f^t_k(x')$.
 It must be emphasized that here the variable  $x'$ is used instead of the variable  $x$ (It was shown
 in Section 2.1 that in cases (I) and (II) $x=x'$, while in cases (III) and (IV)
  $x \neq x'$).
 $$\begin{array}{l}
  \frac{k(k+\alpha+\beta+1)}{\rho^2}f^t_k(x')=
 \frac{\sqrt{(x'+1)(M-x')(\beta+M-x')(\alpha+x')}}{\rho^2}f^t_k(x'+1)-\\
  -\frac{(x'+\alpha+1)(x'-M)+'x(x'-\beta-M-1)}{\rho^2}f^t_k(x')+
  \frac{\sqrt{x'(M-x'+1)(\beta+M-x')(\alpha+x')}}{\rho^2}f^t_k(x'-1).
 \end{array}$$
 We divided the relations by $\rho^2$ to ensure the existence of the limits that will be considered below.

 Denote by $H$ the difference operator given by the right side of the last equation for
 those $x$ for which the Hahn polynomials are defined and, for instance, as identity operator
 for other $x$. The matrix of the operator $H$ is given by
 $$H(x,y)= \begin{cases} K_t(x,y),&x,y\in {\mathfrak X_t}\\
      \delta_x^y &\text{for other } x,y.
   \end{cases}
 $$

 It is readily seen that $H$ is a self-adjoint operator.

 We are interested in the projection on the eigenvectors corresponding to the
 part of the spectrum for  $k\in[0, \dots ,N-1]$, i.e. corresponding to the eigenvalues
 $$\frac{k(k+\alpha+\beta+1)}{\rho^2},\quad k\in[0 \dots N-1].$$
 Observe that the full spectrum of the operator $H$ consists of the negative numbers that
 can be obtained by last formula when $k$ ranges from $0$ to $M$ and the value $1$ that occurred because
 of the formal continuation of the operator. In our model
 $\alpha$ and $\beta$ are rather great in magnitude negative numbers and the function
 $k(k+\alpha+\beta+1)$ is negative and monotonic for the permissible values of $k$.
 Consequently, we can replace the projection on the eigenspaces
 associated with the discrete set of points of the spectrum by the projection associated
 with some segment containing them. Thus the desired projection $P$
 is the spectral projection associated with the following segment:
 $$
  \left[\frac{(N-1)(N+\alpha+\beta)}{\rho^2},0\right].
 $$

 Substitute the values of  $\alpha$, $\beta$ and other parameters and express $H$ and the projection $P$ through
 $\rho$ and the parameters
 $\tilde S$, $\tilde T$, $\tilde N$, $\tilde t$, $\tilde x$:
   $$
   \begin{array}{llll}
   S=\rho \tilde S + o(\rho),&
   T=\rho \tilde T + o(\rho),&
   N=\rho \tilde N + o(\rho),\\
   t=\rho \tilde t + \hat t,&
   x=\rho \tilde x + \hat x.
  \end{array}$$

 We obtain the family of the operators  $P_\rho$ in the space
 $l_2({\mathbb Z})$ where $\hat x$ is the variable.
 Our aim is finding the limit
 $$\hat P = \lim_{\rho \to \infty}P_\rho.$$

 For the application of Theorem VII.24 from \cite{RS} it is necessary that
 the family of the difference operators
 $H_\rho$ obtained by substituting the parameters of the limit regime
 converges to some limit $\hat H$ in the sense of the strong
 operator convergence on the space of the finite vectors.
 This convergence follows from the convergence of the
 matrix elements because in our case all operators $H_\rho$
 are given by tridiagonal matrices.

 Taking the limit of the matrix elements we derive that
  $\hat H$ is a symmetric operator which is given by tridiagonal matrix
  that has the number
 \begin{multline*}
  A=\lim_{\rho \to \infty}-\frac{(x'+\alpha+1)(x'-M)+x'(x'-\beta-M-1)}{\rho^2}=\\
  =-(\tilde S+\tilde N-\tilde x)(\tilde t+\tilde N-\tilde x)-\tilde x (\tilde x+\tilde T-\tilde S-\tilde t)
 \end{multline*}
 on the diagonal and the number
 \begin{multline*}
 B=\lim_{\rho \to \infty}
  \frac{\sqrt{(x'+1)(M-x')(\beta+M-x')(\alpha+x')}}{\rho^2}=\\=
 \lim_{\rho \to \infty}
  \frac{\sqrt{(x')(M-x'+1)(\beta+M-x')(\alpha+x')}}{\rho^2}=\\=
 \sqrt{(\tilde S+\tilde N-\tilde x)(\tilde t+\tilde N-\tilde x)\tilde x (\tilde x+\tilde T-\tilde S-\tilde t)}
 \end{multline*}
 above the diagonal and below it.

 It is noteworthy that although the parameters  $\alpha$, $\beta$, $M$ and $x'$)
 were expressed through the characteristics of the model
 in the different ways (we had 4 cases) here these differences
 disappear and the limit difference operator is the same in all
 cases.

 The limit spectral segment can be easily computed also:
 $$\lim_{\rho\to\infty}\left[\frac{(N-1)(N+\alpha+\beta)}{\rho^2},0\right]=
   \left[-\tilde N (\tilde N+\tilde T),0\right].$$
 Denote by  $I_{[a,b]}$ the indicator of the segment $[a,b]$.

 Observe that the following identity holds:
 $$
  \hat P =I_{\left[-\tilde N (\tilde N+\tilde T),0\right]}(\hat H)=
   I_{\left[\frac{-\tilde N (\tilde N+\tilde T)-A}{2B},-\frac{A}{2B} \right]}\left( \frac{\hat H - A}{2B}\right)
 .$$

 $ \frac{\hat H - A}{2B}$ is the operator which is given by the matrix with
 the number $\frac{1}{2}$ above and below the diagonal and zeros in all other entries.

 To compute explicitly  $\hat P$ we make the Fourier transform $l_2({\mathbb Z})\to L_2(S^1)$ where $S^1$
 is the unit circle in ${\mathbb C}$.

 The Fourier transform of  $ \frac{\hat H - A}{2B}$ is the operator of multiplication
  by the function  $\frac{z+\overline z}{2}=\Re z$.
 Observe that the last operator has purely continuous spectrum that fills the segment $[-1,1]$.
 Hence, the spectrum of the operator $\hat H$ is purely continuous too (this condition is necessary
 for applying Theorem  VII.24 from \cite{RS}).

 Next, we substitute $A$ and $B$. The coordinate of the left end-point of the spectral segment
 is equal to
  $$\frac{-\tilde N(\tilde N+\tilde T)+(-\tilde x+\tilde S+\tilde N)(\tilde t+\tilde N-\tilde x)+\tilde X(\tilde T+\tilde x-\tilde S-\tilde t)}
                   {2\sqrt{\tilde x(-\tilde x+\tilde S+\tilde N)(\tilde t+\tilde N-\tilde x)(\tilde x+\tilde T-\tilde S-\tilde t)}},$$
 while the coordinate of the right one is equal to
 $$ \frac{(-\tilde x+\tilde S+\tilde N)(\tilde t+\tilde N-\tilde x)+\tilde x(\tilde T+\tilde x-\tilde S-\tilde t)}
                   {2\sqrt{\tilde x(-\tilde x+\tilde S+\tilde N)(\tilde t+\tilde N-\tilde x)(\tilde x+\tilde T-\tilde S-\tilde t)}}
 .$$

 Note that the value of the last expression is greater than one.
 Therefore, the Fourier transform of the spectral projection
 $\hat P$ becomes the operator of multiplication by the characteristic function
 of the right arc of the unit circle contained between the angles
 $-\phi$ and $\phi$ where
 $$\phi=\arccos\frac{-\tilde N(\tilde N+\tilde T)+(-\tilde x+\tilde S+\tilde N)(\tilde t+\tilde N-\tilde x)+\tilde x(\tilde T+\tilde x-\tilde S-\tilde t)}
                   {2\sqrt{\tilde x(-\tilde x+\tilde S+\tilde N)(\tilde t+\tilde N-\tilde x)(\tilde x+\tilde T-\tilde S-\tilde t)}}
 .$$

 Finally we perform the inverse Fourier transform and find the
  matrix of the operator $\hat P$ which coincides with the desired
  correlation kernel
 $$
  K_{\hat t}(\hat x,\hat y)=K(\hat x,\hat y)=
  \frac{1}{2\pi i}\oint_{e^{-i\phi}}^{e^{i\phi}} w^{\hat y-\hat x-1}dw=
  \frac{1}{2\pi i}\oint_{e^{-i\phi}}^{e^{i\phi}} w^{\hat x-\hat y-1}dw
 $$
 (the integration is to be performed over the right side of the unit circle).

 The last integral can be computed explicitly and it coincides with the discrete sine
 kernel:
 $$
  K(\hat x,\hat y)=\frac{\sin (\phi(x-y))}{\pi (x-y)}.
 $$

 Observe that applying the same method we could have found the limit of the kernel
 $$\breve K_t(x,y)=-\sum\limits_{i\ge N}f_i^t(x)f_i^t(y).$$
 This limit is equal to
 $$
   \breve K_{\hat t}(\hat x,\hat y)=\breve K(\hat x,\hat y)=
  \frac{1}{2\pi i}\oint_{e^{-i\phi}}^{e^{i\phi}} w^{\hat x-\hat y-1}dw,
 $$
 where the integration is to be performed over the left side of the unit circle.
 In the space $L_2(S^1)$ we obtain the operator of multiplication by the characteristic function
  of the left arc of the unit circle.

 \subsection{ Passage to the dynamical kernel}

 Now assume that $s<t$.
 We have
 $$
  K(x,s;y,t)=-\sum_{i\ge N}\left(\prod_{j=s}^{t-1}c^j_i\right)f_i^s(x)f_i^t(y)
 .$$
 Fix $s$ and $t$ and let $\cal Q$ be the operator in $l_2({\mathbb Z})$ which is given
 by matrix
 $$
  {\cal Q}(x,y)=\begin{cases} K(x,s;y,t), &x\in {\mathfrak X_s}, y\in {\mathfrak X_t},\\
                 0 &\text{for other } x,y.
 \end{cases}
 $$
 This operator can be expressed in the following way:
 $${\cal Q}={\cal P}_t \cdot U_{t-1}\cdot U_{t-2}\cdots U_{s}.$$
 Here ${\cal P}_t$ is the operator corresponding to the kernel $\breve K_t(x,y)$ considered in the end of the
 last section, this operator is given by matrix
 $$
  {\cal P}_t(x,y)=\begin{cases} -\sum\limits_{i \ge N}f_i^t(x)f_i^t(y), &x,y\in {\mathfrak X_t},\\
                 0 &\text{for other } x,y,
   \end{cases}
 $$
 and $U_h$ is the operator mapping every function $f_i^h(x)$ into
 $c^h_if^{h+1}_i(x)$ (if in the time moment
 $h+1$ one less polynomial exist, then the function corresponding to the polynomial of the highest power
 is mapped to zero; this definition agrees with the fact that corresponding coefficient $c^h_i$ is equal to zero).
 On the orthogonal complement to the span of the functions $f^h_i$ set $U_h$ equal to zero.
 This operator can be given by the matrix
 $$
   U_h(x,y)=\begin{cases} \sum\limits_{i \ge 0}c_i^hf_i^h(x)f_i^{h+1}(y),
        &x\in{\mathfrak X_h},y\in{\mathfrak X_{h+1}},\\
            0 &\text{for other } x,y.
  \end{cases}
 $$

 Now we express all parameters in the terms
 of characteristics of the limit regime and obtain the family of the operators
 depending on the parameter $\rho$.
 To compute the limit  ${\cal Q}_\rho$ when $\rho \to \infty$
 we take the limit of every factor separately and then use the following argument:
 all factors strongly converge on the dense set of the finite vectors, the norm of each factor
 is not greater than $1$  (For the operators $U_h$ it follows from the fact that
 $0\le c_i^j \le 1$), consequently, operators strongly converge on the whole space
  $l_2({\mathbb Z})$. The operation of multiplication is continuous
  in the strong operator topology if norms of all factors are uniformly bounded.
 This argument implies the strong convergence of the operators  $\cal Q_\rho$.

 Observe that the matrix $U_h$ have already been computed in the proof of the determinantal property of the
 process:
 \begin{multline*}
  U_h(x,y)=v_{h,h+1}(x,y)=\\=\sqrt{\frac{w_h(x)}{w_{h+1}(y)}}
  \left[\frac{S+N-1-x}{\sqrt{(h+N)(T+N-h-1)}}\delta_{x+1}^y+
  \frac{T-h-S+x}{\sqrt{(h+N)(T+N-h-1)}}\delta_x^y\right]=\\=
  \delta_{x+1}^y\sqrt\frac{(S+N-x-1)(x+1)}{(h+N)(T+N-h-1)}+
  \delta_x^y\sqrt\frac{(T-h-S+x)(h+N-x)}{(h+N)(T+N-h-1)}\\
    (x\in {\mathfrak X_h}, y\in {\mathfrak X_{h+1}}).
 \end{multline*}

 Next, we substitute in the last expression all parameters of the limit regime
 $$ \begin{array}{lll}
   S=\rho \tilde S + o(\rho),&
   T=\rho \tilde T + o(\rho),&
   N=\rho \tilde N + o(\rho),\\
   t=\rho \tilde t + \hat t,&
   t=\rho \tilde t + \hat s,&
   x=\rho \tilde x + \hat x
  \end{array}$$
  and obtain the operators $U_{h,\rho}$.
  Taking into account that $s\le h<t$ we send  $\rho$ to infinity.
  We obtain the operator which is given by the matix
  $$\hat U(\hat x,\hat y)=
  \delta_{\hat x+1}^{\hat y}\sqrt\frac{(\tilde S+\tilde N-\tilde x)(\tilde x)}{(\tilde t+\tilde N)(\tilde T+\tilde N-\tilde t)}+
  \delta_{\hat x}^{\hat y} \sqrt\frac{(\tilde T-\tilde t-\tilde S+\tilde x)(\tilde t+\tilde N-\tilde x)}{(\tilde t+\tilde N)(\tilde T+\tilde N-\tilde t)}
  .$$

  Now we make the Fourier transform and multiply all factors. The result is the limit operator
  $\cal \hat Q$.It is the operator of multiplication by the product of
 the function
  $$
   \left(\sqrt
     \frac{(\tilde T-\tilde t-\tilde S+\tilde x)(\tilde t+\tilde N-\tilde x)}{(\tilde t+\tilde N)(\tilde T+\tilde N-\tilde t)}
            +(1/z)\cdot\sqrt\frac{(\tilde S+\tilde N-\tilde x)\tilde x}{(\tilde t+\tilde N)(\tilde T+\tilde N-\tilde t)}\right)^{\hat t-\hat s}
  $$
  and the indicator of the left side of the circle contained between the angles
  $\phi$ and $2\pi-\phi$.

  Finally, we perform the inverse Fourier transform
  and derive the desired correlation kernel:
 \begin{multline*}
    K(\hat x,\hat s; \hat y, \hat t)=\\=
  \frac{1}{2\pi i}\oint_{e^{-i\phi}}^{e^{i\phi}}
   \left(\sqrt
     \frac{(\tilde T-\tilde t-\tilde S+\tilde x)(\tilde t+\tilde N-\tilde x)}{(\tilde t+\tilde N)(\tilde T+\tilde N-\tilde t)}
            +\frac{1}{z}\sqrt\frac{(\tilde S+\tilde N-\tilde x)\tilde x}{(\tilde t+\tilde N)(\tilde T+\tilde N-\tilde t)}\right)^{\hat t-\hat s}
   \cdot \\ \cdot z^{\hat y-\hat x-1}dz\stackrel{w = 1/z}=\\=
   \frac{1}{2\pi i}\oint_{e^{-i\phi}}^{e^{i\phi}}
   \left(\sqrt
     \frac{(\tilde T-\tilde t-\tilde S+\tilde x)(\tilde t+\tilde N-\tilde x)}{(\tilde t+\tilde N)(\tilde T+\tilde N-\tilde t)}
            +w\cdot\sqrt\frac{(\tilde S+\tilde N-\tilde x)\tilde x}{(\tilde t+\tilde N)(\tilde T+\tilde N-\tilde t)}\right)^{\hat t-\hat s}
   \cdot \\ \cdot w^{\hat x-\hat y-1}dw=\\
   =\left(\sqrt
     \frac{(\tilde T-\tilde t-\tilde S+\tilde x)(\tilde t+\tilde N-\tilde x)}{(\tilde t+\tilde N)(\tilde T+\tilde N-\tilde t)}\right)^{\hat t-\hat s}
   \frac{1}{2\pi i}\oint_{e^{-i\phi}}^{e^{i\phi}}
            (1+cw)
   w^{\hat x-\hat y-1}dw
  .\end{multline*}

  The factor outside the integral can be omitted as it corresponds
  to a conjugation of the kernel which doesn't change the correlation functions.
  We obtained the desired expression from the formulation of the theorem.

  In the case $s>t$ reasoning is similar and we will omit some
  details.
  Consider the operator  $\cal G$ which is given by the matrix
  $$
  {\cal G}(x,y)=\begin{cases}
                 K(x,s;y,t),& x\in {\mathfrak X_s}, y\in {\mathfrak X_t},\\
                 0 & \text{for other } x,y.
                \end{cases}
  $$
  It can be expressed in the following way:
  $${\cal G}=V_{s-1}\cdots V_{t+1}\cdot V_{t}\cdot{\cal R}.$$
  Here $V_h$ is the operator defined on the first $N$ functions $f^h_i$
  by the formula $V_h(f^h_i)=\frac{f^{h+1}_i}{c^h_i}$. As above we extended
  it to the operator on the whole space.
  Denote by $\cal R$ the orthogonal projection onto the span of the first $N$ functions $f_i^t$.

  Next we express the operators through the parameters of the limit regime
  and obtain three families of the operators: ${\cal G}_\rho$, $V_{h,\rho}$ and ${\cal R}_\rho$.

  Using the definition of $c^h_i$ we conclude again
  that norms of the operators $V_{h,\rho}$ and ${\cal R}_{\rho}$ are uniformly bounded.

  $\hat{\cal R}$, which is the limit of the operators ${\cal R}_\rho$ when $\rho \to
  \infty$,
  has been already computed in the previous section. This operator is equal to the operator of the restriction
  to the right arc of the unit circle contained between the angles $-\phi$ and $\phi$.

  For further reasoning we need the following lemma:
  \begin{lemma}
   Consider two families $A_\rho$ and $B_\rho$ of the operators in
   the Hilbert space.
   Assume that when  $\rho \to \infty$
   norms of all operators are bounded by some constant,
    $A_\rho\to A$
   and $B_\rho A_\rho\to P$ in the sense of the strong operator
   convergence,  $P$ is an arbitrary orthoprojection  and $E$ is its image.
   Also we assume that $E$ is invariant space for the operator $A$
   and a restriction  $A_E$ of the last operator is invertible.
   Then on the subspace $E$ operators $B_\rho$
   strongly converge to $A_E^{-1}$.
  \end{lemma}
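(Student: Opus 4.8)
The plan is to reduce the whole statement to a single algebraic identity that lets the hypothesis $B_\rho A_\rho \to P$ carry the argument. Fix a vector $v \in E$. Since by assumption $A_E$ is invertible on $E$, I would set $w := A_E^{-1} v$, so that $w \in E$ and, because $E$ is $A$-invariant, $A w = A_E w = v$. The point of introducing $w$ is that $B_\rho A_\rho w$ is a quantity we control directly, and I intend to recover the desired $B_\rho v$ from it.

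The key computation I would carry out is the decomposition
$$ B_\rho v = B_\rho A_\rho w + B_\rho\,(v - A_\rho w). $$
First I would treat the leading term: by the strong convergence $B_\rho A_\rho \to P$ we get $B_\rho A_\rho w \to P w$, and since $w$ lies in $E$, which is the range of the orthoprojection $P$, we have $P w = w$; hence $B_\rho A_\rho w \to w = A_E^{-1} v$. Next I would show the remainder is negligible: because $A_\rho \to A$ strongly we have $A_\rho w \to A w = v$, so $v - A_\rho w \to 0$, and the uniform norm bound $\|B_\rho\| \le C$ then gives $\|B_\rho (v - A_\rho w)\| \le C\,\|v - A_\rho w\| \to 0$. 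Adding the two contributions yields $B_\rho v \to A_E^{-1} v$, which is precisely the asserted strong convergence on $E$.

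The places that require care, rather than genuine obstacles, are the two hypotheses that make the argument go through. Invertibility of $A_E$ is what guarantees that every $v \in E$ can be written as $A w$ with $w \in E$; without it one could not express the target vector in a form on which $B_\rho A_\rho$ acts controllably. The uniform boundedness of $\|B_\rho\|$ is equally essential, since it is what upgrades the merely pointwise convergence $A_\rho w \to v$ into the vanishing of $B_\rho(v - A_\rho w)$; here the argument $v - A_\rho w$ moves with $\rho$, so only a uniform bound on $B_\rho$ controls it. Beyond this bookkeeping I expect no difficulty: the entire proof is the displayed one-line identity together with the two assumptions applied to the single fixed vector $w$.
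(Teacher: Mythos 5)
Your proof is correct and is essentially identical to the paper's own argument: the paper also writes $x = Ay$ with $y = A_E^{-1}x \in E$ and uses the same decomposition $B_\rho x = B_\rho A_\rho y + B_\rho(x - A_\rho y) \to Py + 0 = y$. If anything, you are slightly more explicit than the paper in spelling out that the uniform bound on $\|B_\rho\|$ is what kills the remainder term.
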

  \begin{proof}
   Consider an arbitrary vector $x$ belonging to the $E$. It follows that one can find
   such $y$ that  $x=Ay$. Then we have:
   $$B_\rho A_\rho y\to y,$$
   $$A_\rho y\to x,$$
   $B_\rho x=B_\rho (A_\rho y+(x-A_\rho y))=B_\rho A_\rho y+B_\rho (x-A_\rho y)\to y+0=y.$
  \end{proof}

  Let us apply the proved lemma taking $A_\rho=U_{h+1,\rho}$ and $B_\rho=V_{h,\rho}$.
  The product $B_\rho A_\rho$ is the operator which is given by the matrix that coincides with
   the kernel studied in the statical case and, thus, converges
   to the projection $\hat{\cal R}$. It was shown above that the operators $A_\rho$
  converge to the operator $\hat U$ which is the operator of multiplication by the function
  $$ \sqrt
     \frac{(\tilde T-\tilde t-\tilde S+\tilde x)(\tilde t+\tilde N-\tilde x)}{(\tilde t+\tilde N)(\tilde T+\tilde N-\tilde t)}
            +(1/z)\cdot\sqrt\frac{(\tilde S+\tilde N-\tilde x)\tilde x}{(\tilde t+\tilde N)(\tilde T+\tilde N-\tilde t)}
  $$
  in the $L_2(S^1)$--realization.

  The restriction of this function to the unit circle is non-zero and invertible when
   $z\neq -1$ while it may be equal to zero when $z=-1$. But the limit
   projection
   $\hat{\cal R}$ is the operator of multiplication by the characteristic function of some
   arc which doesn't contain the point $z=-1$. Consequently, restriction of the operator
    $\hat U$ to the image of projection $\hat{\cal R}$ is invertible and we may apply the last lemma.

  We obtain that (in $L_2(S^1)$--realization)
  limit of the operators ${\cal G}_\rho$ multiplies vectors from the image of the limit
  projection
  $\hat{\cal R}$ by some function. This function is inverse to the function which
  the operator $\hat U$ is multiplying by.
  On the other hand, ${\cal R}_\rho\to 0$ for vectors from the orthogonal
  complement to the image of the projection $\hat{\cal R}$, consequently,
  as norms of all operators $V_{h,\rho}$ are bounded, so ${\cal
  G}_\rho\to 0$.

  Further reasoning is identical with the case $s<t$. Theorem is proved.

 \section{Analysis of the results}
 \subsection{ Frozen regions}

 Now we will take a look more closely at the obtained one-dimensional correlation function which can
also be
 called the density function.

 Recall that all points of the collections of non-intersecting paths under
 consideration form a hexagon whose sides are
 parallel  to one axis, other two ones are parallel to another axis
 and third pair of the sides is inclined 45 degrees with respect to
 the axis.

 In the paper \cite{Cohn-Larsen-Propp} tilings of the hexagon by rhombi were
 studied. These tilings can also be interpreted as collections of non-intersecting
 paths.
 It was proved that so-called ``frozen''(or ``arctic'') regions
 exist.
 In these regions the limit distribution is non-random and tiling consists of rhombi of only one type
 (for the exact wording see Theorem 1.1 in \cite{Cohn-Larsen-Propp}). We will show below that our results
 agree with this theorem and the shape or the frozen regions can be obtained also from
 our dynamical kernel as regions of such values of the parameters of the limit regime that
 local fluctuations disappear and the limit process is trivial (either all points of the lattice
 belong to the random configuration with probability one or no points at all belong to the
 random configuration with probability one).

 Let us fix the parameters of the hexagon $\tilde N$, $\tilde T$, $\tilde S$ in our model
 and study the dependence of the density function on the point $(\tilde t,\tilde x)$.

 \begin{proposition}
  The limit distribution is non-trivial only inside an ellipse,
 which is tangent to the hexagon defined by the parameters
  $\tilde N$, $\tilde T$, $\tilde S$ while outside this ellipse
  the density function is equal to zero or to one.
 \end{proposition}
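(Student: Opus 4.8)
The plan is to read the one-point density off the static kernel and recognize the triviality condition as the equation of a conic. The static kernel computed in Section 3.2 is the discrete sine kernel $\frac{\sin(\phi(\hat x-\hat y))}{\pi(\hat x-\hat y)}$, so the limiting density at $(\tilde t,\tilde x)$ is its value on the diagonal, namely $\phi/\pi$. Hence the process is \emph{trivial} (density $0$ or $1$) exactly when $\phi\in\{0,\pi\}$, i.e. when the argument of the arccosine leaves the open interval $(-1,1)$: when it exceeds $1$ one has $\phi=0$ and empty configurations, and when it is below $-1$ one has $\phi=\pi$ and full configurations. Writing that argument as $\mathcal N/(2\sqrt{\mathcal R})$ with
\[ \mathcal N=-\tilde N(\tilde N+\tilde T)+(\tilde S+\tilde N-\tilde x)(\tilde t+\tilde N-\tilde x)+\tilde x(\tilde T+\tilde x-\tilde S-\tilde t), \]
\[ \mathcal R=\tilde x(\tilde S+\tilde N-\tilde x)(\tilde t+\tilde N-\tilde x)(\tilde x+\tilde T-\tilde S-\tilde t), \]
and noting $\mathcal R>0$ throughout the interior of the hexagon, the non-triviality condition $-1<\mathcal N/(2\sqrt{\mathcal R})<1$ is equivalent to $\Phi:=\mathcal N^2-4\mathcal R<0$. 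Thus the boundary of the non-trivial region is the curve $\{\Phi=0\}$, and it remains to show this is an ellipse inscribed in the hexagon.

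The key algebraic fact is that $\Phi$, a priori of degree four in $(\tilde t,\tilde x)$, is in fact a quadratic. Indeed $\mathcal N$ is quadratic with top form $2\tilde x^2-2\tilde t\tilde x=2\tilde x(\tilde x-\tilde t)$, while $\mathcal R$ is a product of four linear factors whose top form is $\tilde x^2(\tilde x-\tilde t)^2$; hence the degree-four parts of $\mathcal N^2$ and $4\mathcal R$ coincide and cancel, and a short computation shows the degree-three parts cancel as well, so $\Phi$ is a genuine conic. Extracting its leading quadratic form gives
\[ (\tilde S+\tilde N)^2\,\tilde t^2+2(\tilde N\tilde T-\tilde S\tilde T-2\tilde N\tilde S)\,\tilde t\,\tilde x+\tilde T^2\,\tilde x^2, \]
whose discriminant works out to $-16\,\tilde N\tilde S(\tilde T-\tilde S)(\tilde T+\tilde N)$. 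Since $\tilde S,\tilde N>0$, $\tilde T+\tilde N>0$ and $\tilde T>\tilde S$ for a non-degenerate hexagon, this is negative, so the form is positive definite. Consequently $\{\Phi=0\}$ is an ellipse and the non-trivial region $\{\Phi<0\}$ is exactly its bounded interior.

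Finally I would verify tangency to the six sides, which lie on the lines $\tilde t=0$, $\tilde t=\tilde T$, $\tilde x=0$, $\tilde x=\tilde S+\tilde N$, $\tilde x=\tilde t+\tilde N$ and $\tilde x=\tilde t-(\tilde T-\tilde S)$. On each of the last four — precisely the lines where one factor of $\mathcal R$ vanishes — we have $\Phi=\mathcal N^2\ge 0$, so the interior $\{\Phi<0\}$ never meets the line and the ellipse can touch it only where $\mathcal N=0$; at such a point $\Phi$ and its derivative along the line both vanish, which is exactly second-order contact, i.e. tangency (and one checks the contact point lies on the actual edge, e.g. on $\tilde x=0$ it is $\tilde t=\tilde N(\tilde T-\tilde S)/(\tilde S+\tilde N)\in(0,\tilde T-\tilde S)$). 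For the two vertical lines $\tilde t=0$ and $\tilde t=\tilde T$, where $\mathcal R$ does not vanish, I would instead substitute directly and check that the restriction of $\Phi$ is a perfect square in $\tilde x$, i.e. its discriminant vanishes, again yielding tangency; alternatively this follows from the $S_3$-symmetry of the hexagon, which permutes the three pairs of opposite sides and fixes the canonically associated ellipse, so tangency to one pair forces it for the others.

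The conceptual heart of the argument — and the step most likely to conceal an error — is the collapse of $\Phi$ from degree four to degree two: the simultaneous cancellation of the quartic and cubic homogeneous parts is what makes $\{\Phi=0\}$ a conic at all, and it rests on the precise algebraic form of $\mathcal N$ and $\mathcal R$ rather than on any soft reasoning. Once this collapse and the sign of the leading discriminant are in hand, the ellipse property and the tangency checks are elementary, so I would organize the proof to isolate and carefully execute these two polynomial identities first.
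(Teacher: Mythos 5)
Your proposal is correct and follows essentially the same route as the paper: read the density $\phi/\pi$ off the diagonal of the static kernel, translate triviality into the condition that the arccosine argument satisfies $D^2\ge 1$, simplify this to a quadratic inequality whose zero set is an ellipse, and verify tangency to the hexagon. The only difference is one of detail: you explicitly carry out the steps the paper compresses into ``after simplification'' and ``one can easily check'' (the cancellation of the quartic and cubic parts of $\mathcal N^2-4\mathcal R$, the sign of the discriminant $-16\tilde N\tilde S(\tilde T-\tilde S)(\tilde T+\tilde N)$, and the tangency argument via $\Phi=\mathcal N^2\ge 0$ on the lines where a factor of $\mathcal R$ vanishes), all of which check out.
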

 \begin{proof}
 Let us compute $K(\hat x,\hat t;\hat x,\hat t)$ which coincides with the density function.
 We have:
 $$K=\frac{1}{2\pi i}\oint_{-\phi}^{\phi}\frac{dw}{w}=\frac{\phi}{\pi}.$$
 For some values of the parameters  either $\phi=0$ or $\phi=\pi$, it means that the density
 is equal to 0 or 1.
 In our terms the desired case is equivalent to the fact that
 in the expression defining the angle  $\phi$ ($\phi= \arccos D$)
 the parameter $D$ is not less than 1 in magnitude.
 We can rewrite it as  $D^2\ge 1$. After simplification last inequality transforms into
 \begin{multline*}
  \tilde T^2\tilde x^2+(\tilde S+\tilde N)^2\tilde t^2+2\tilde x\tilde t
  (\tilde N\tilde T-\tilde S\tilde T-2\tilde S\tilde N)+\\+
  2\tilde t(\tilde S\tilde N^2-\tilde N\tilde T\tilde S-\tilde N^2\tilde T+\tilde S^2\tilde N)
  +2\tilde x(\tilde N\tilde T\tilde S-\tilde N\tilde T^2)
  +\\+\tilde N^2(\tilde T-\tilde S)^2\le 0.
 \end{multline*}

 Interior of some ellipse is given by this expression.
 One can easily check that this ellipse is tangent to the borders of the hexagon under
 consideration
  defined by the parameters
 $\tilde N$, $\tilde T$, $\tilde S$.

 In the paper \cite{Cohn-Larsen-Propp} the frozen region was the same, it was the interior
 of the tangent ellipse.
 \end{proof}

 \subsection{Connection with Okounkov-Reshetikhin model}

 In this section we will show the connection between our results and
 results of the paper \cite{Ok2}.
 In the last paper some point process was studied. It had the correlation kernel equal to
  $$
  K_{OR}(x,s;y,t)=\frac{1}{2\pi i}\oint_{\overline z}^z (1-w)^{t-s} w^{x-y+(s-t)/2-1} dw
 ,$$
  here $z$ is some complex number depending on the parameters of the model. This number is
  less than 1 in magnitude and has positive real component.
  The integration is to be performed over the right arc of the circle when  $t\ge s$, otherwise, over the left arc.
 \begin{proposition}
 One can choose a linear transformation of the lattice  ${\mathbb Z^2}$ in such a way
 that processes given by our kernel and by the kernel from \cite{Ok2}
 are connected by the particles-hole involution, i.e. random configurations given by the former kernel
 are complementary to the ones described by the latter kernel.
 \end{proposition}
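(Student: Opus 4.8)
The plan is to interpret the particle--hole involution as passage to the complementary determinantal process and then to reconcile the resulting kernel with $K_{OR}$ through an explicit linear change of the space--time coordinates and a gauge transformation. I would first use the standard fact that if a dynamical determinantal process has kernel $K(x,s;y,t)$, then the process of complementary (hole) configurations is again determinantal, with kernel $\delta_{x,y}\delta_{s,t}-K(x,s;y,t)$. In the operator realization of Section~3 our static kernel ($s=t$) is multiplication by the indicator of the right arc of the unit circle between the angles $-\phi$ and $\phi$, so complementation simply replaces this arc by the complementary (left) arc. For $s\neq t$ the same description survives: splitting the full--circle integral of $(1+cw)^{t-s}w^{x-y-1}$ into its right-- and left--arc parts shows that $\delta_{x,y}\delta_{s,t}-K$ is once more a contour integral of exactly the same shape, now taken over the opposite arc, with the side again governed by the sign of $s-t$ --- precisely the convention built into $K_{OR}$.

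Next I would produce the matching change of variables. The multiplicative substitution $w\mapsto -w/c$ turns the factor $(1+cw)^{t-s}$ into $(1-w)^{t-s}$ and contributes only a prefactor of the form $(\mathrm{const})^{x-y}$; being of the type $F(x,s)/F(y,t)$, it does not affect the correlation functions by the conjugation remark of Section~2. This substitution carries the endpoints $e^{\pm i\phi}$ to a complex--conjugate pair, which after an admissible radial deformation of the contour I would identify with the points $z,\bar z$ of $K_{OR}$; from the bulk formulas for $c$ and $\phi$ one then reads off $z$ as a function of $(\tilde x,\tilde t)$ and verifies the constraints $|z|<1$ and $\Re z>0$ of \cite{Ok2}. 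There remains the discrepancy between the exponents $x-y+(s-t)/2-1$ of $K_{OR}$ and $x-y-1$ of our kernel: the extra $(s-t)/2$ is generated by the shear $(x,s)\mapsto(x+\tfrac{s}{2},s)$ of the space--time lattice, possibly composed with the reflection $x\mapsto-x$ that interchanges the two arcs and thereby realizes the exchange of particles and holes at the level of positions. The composition of this shear, this reflection, and the gauge is the required linear transformation of $\mathbb{Z}^2$.

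The hard part will be the simultaneous bookkeeping for the two time orderings. The side of the contour, the branch of $(1\pm w)^{t-s}$, and the pole at $w=-1/c$ that appears exactly when $s>t$ all depend on the sign of $s-t$, and I must check that one and the same linear map, together with the complementation, matches both the $s\ge t$ and the $s<t$ formulas of $K_{OR}$ with consistent orientation of the arcs. I also expect to verify that the shear sends the integer point set on which our kernel lives bijectively onto the (time--shifted) lattice of the Okounkov--Reshetikhin model, so that the two processes are genuinely related by a lattice isomorphism rather than by a merely formal identity of integrands.
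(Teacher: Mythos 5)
Your overall plan --- pass to the complementary process with kernel $\delta_{x,y}\delta_{s,t}-K(x,s;y,t)$ and then match it to $K_{OR}$ by a change of the integration variable, a gauge factor $F(x,s)/F(y,t)$, and the shear producing the exponent $(s-t)/2$ --- is in substance the paper's argument (the paper runs it backwards, transforming $K_{OR}$ until it becomes $\delta_{x=y,s=t}-K$). However, your treatment of the unequal-time case contains a genuine error. For $s\neq t$ the complement kernel is $\delta_{x,y}\delta_{s,t}-K=-K$, i.e.\ \emph{minus the integral over the same arc}; it is \emph{not} the integral over the opposite arc. Your splitting of the full-circle integral would identify the two only if
\[
\frac{1}{2\pi i}\oint_{|w|=1}(1+cw)^{t-s}\,w^{x-y-1}\,dw=\delta_{x,y}\delta_{s,t},
\]
but for $s\neq t$ (say with $c<1$) the left-hand side equals the residue at $w=0$, namely $\binom{t-s}{y-x}\,c^{\,y-x}$, which at $x=y$ equals $1$, not $0$ (for $c>1$ and $s>t$ there is a further residue at $w=-1/c$). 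Thus the object you propose to identify with the transformed $K_{OR}$ differs from the true complement kernel by a term that is not of the gauge form $F(x,s)/F(y,t)$, and the correlation determinants involving unequal times would come out wrong.

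The correct mechanism --- and the one in the paper --- is that the arc-complementation identity (full circle $=\delta$) is used \emph{only} at $s=t$, where it is valid, while for $s\neq t$ the needed minus sign is produced by the substitution $w\mapsto -w/c$ itself: besides interchanging the right and left arcs and turning $(1+cw)^{t-s}$ into $(1-w)^{t-s}$, it \emph{reverses the orientation} of the contour relative to the Okounkov--Reshetikhin convention, and this orientation reversal contributes exactly the factor $-1$ that cancels the sign of $-K$. Two smaller points to repair: the endpoints of a non-closed contour cannot be relocated by an ``admissible radial deformation'' (the value of the integral depends on them); they are placed at radius $c$ by the scaling inside $w\mapsto -w/c$, which forces $|z|=c$, so the case $c>1$ needs the separate substitution $w\mapsto 1/w$ (absorbed into yet another lattice transformation), as at the end of the paper's proof. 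Also, the interchange of the two arcs is effected by $w\mapsto -w$, equivalently by the gauge factor $(-1)^{x-y}$, not by the spatial reflection $x\mapsto -x$.
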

 \begin{proof}
  Observe that choosing five parameters of our model we can obtain
  arbitrary values of the kernel parameters $0<c\le 1$ and $0\le\phi\le \pi$.

  Now take a look at the kernel from \cite{Ok2}. Let us change the variables in the integral
  to transform the path of integration into the arc of the unit circle (note that arising factor outside the integral
  can be omitted as this factor doesn't change correlation functions).
  Next, we introduce the linear transformation of the lattice
  $$s'-t' = s - t,$$
  $$x'-y' = x - y + (s-t)/2.$$
  Indeed this transformation maps the lattice considered by Okounkov and Reshetikhin
  into  ${\mathbb Z^2}$.
  The kernel becomes
  $$K(x',s';y',t')=\frac{1}{2\pi i}\oint_{-\phi}^{\phi}(1-cw)^{t'-s'}w^{x'-y'-1}dw
  $$
  (the integration is to be performed over the unit circle, the choice of arc is made in the same way as above,
   $0<c\le 1$)
  .
  For $s\neq t$ we make a change of variables in the integral  $z=-w$:
  $$ K= \frac{(-1)\cdot (-1)^{x-y}}{2\pi i}\oint_{-\phi}^{\phi}(1+cz)^{t'-s'}z^{x'-y'-1}dz
  .$$
  Here the integration is performed over the right arc of the unit circle
  when $t<s$ and over the left one otherwise. The factor $(-1)^{x-y}$ can be omitted again.
  This kernel differs from the kernel of our model by the sign for $s\neq t$.
  While for $s=t$ the kernel differs from the desired one by the arc of the
  integration. To obtain the required arc we should subtract the
  integral from the residue of the integrand at the point 0, i.e. subtract from 1 for $x=y$ and subtract from 0 for $x\neq y$.
  Thus, two kernels are related in the following way: $K_1=\delta_{x=y,s=t}-K_2$.
  But it is exactly the transformation of the kernel in the particle-holes involution.
  (for instance, it was proved in  Appendix A.3 in \cite{BOkOl}).

  Note that when the parameter of the integral $c>1$ in our model we can change the variables: $w=1/z$. The constant
  $c$ becomes $1/c$ and also index of power of $z$ changes from $x-y-1$ into $x-y+(s-t)-1$. However,
  this transform is only another linear transformation of the lattice. Consequently, the proposition
  is proved in that case too.
 \end{proof}

 This result could have been foreseen, because the models are closely related.
 In \cite{Ok2} the ensemble of 3d Young diagrams without any restrictions was studied and one of the interpretation
 of our model is the ensemble of 3d Young diagrams contained in the
 box. Other argument in favor of this result is hypothetical
 universality of the obtained kernel stated in  \cite{Ok2}. One of the additional proofs of the universality
 was derivation of the same kernel in  \cite{Bor} as bulk scaling limit of the cylindric partitions.

\section{Appendix. Possible interpretations of the model under consideration}

The model under consideration has lots of different combinatorial
interpretations. We will show some of them.

Consider a collection of non-intersecting paths we studied through
the paper.

\begin{center}
 {\scalebox{0.7}{\includegraphics{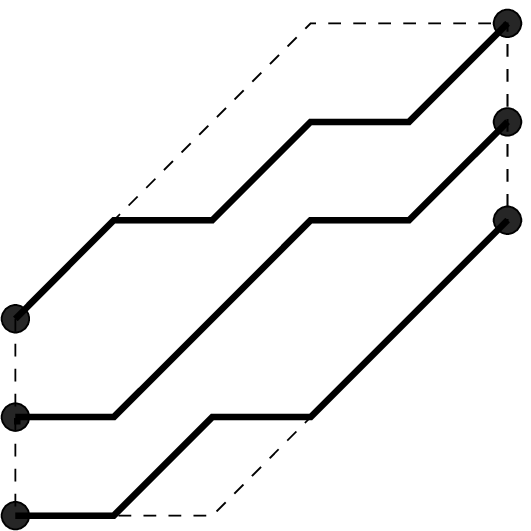}}}

 Figure 2. 3 non-intersecting paths.
\end{center}

Now we incline at an angle of 30 degrees with respect to the
horizontal axis segments of the polygonal lines which were inclined
at an angle of 45 degrees. And replace the horizontal segments by
the segments inclined at an angle of 30 degrees to the axis again,
but this time downward directed. This new picture can be interpreted
as a family of the paths on the surface of 3d Young diagram in the
box:

 \begin{center}
 {\scalebox{0.4}{\includegraphics{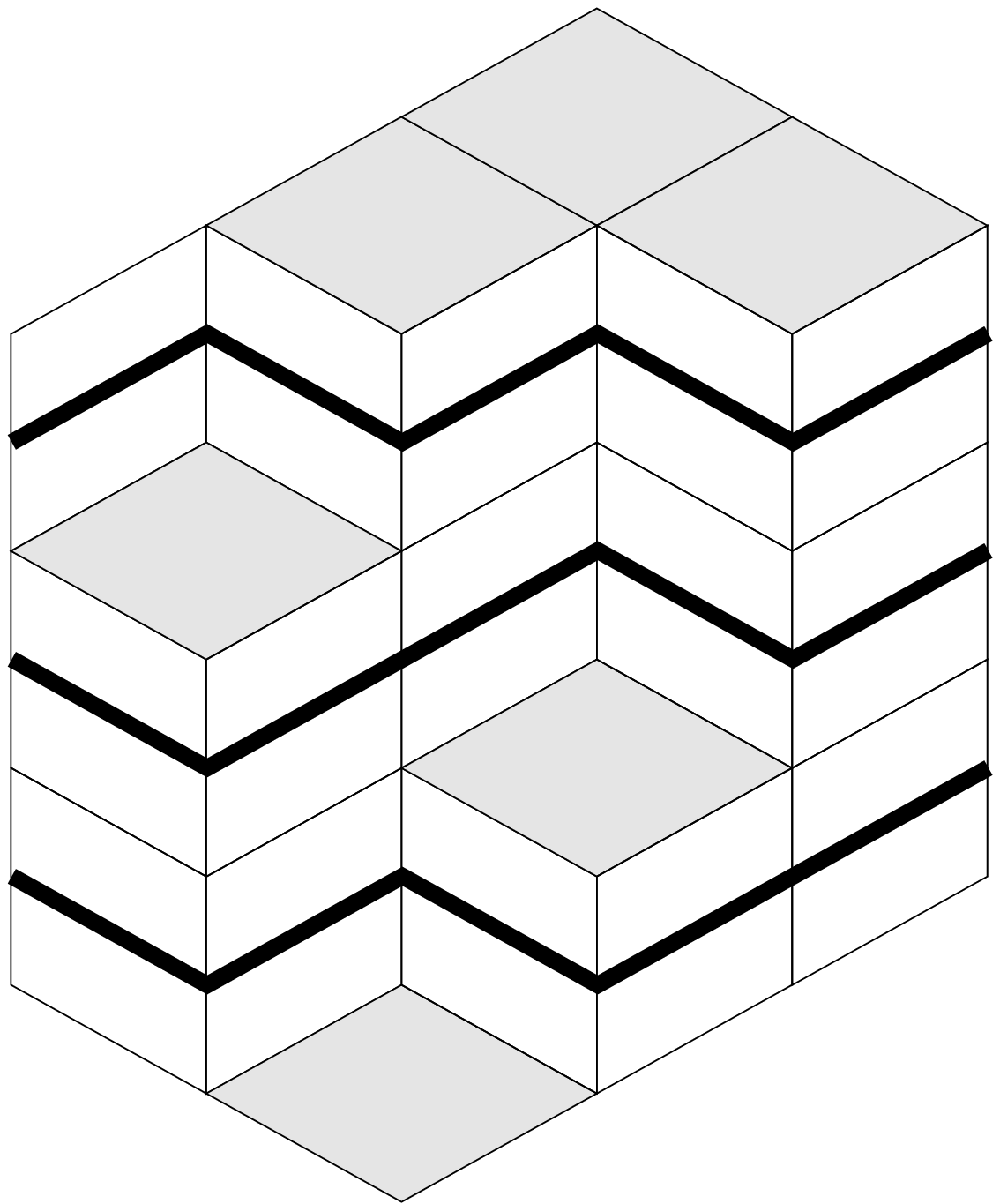}}}

 Figure 3. Non-intersecting paths on the surface of 3d Young diagram.
 \end{center}

The last image can also be regarded as a tiling of the hexagon by
rhombi of 3 types.

\end{document}